\renewcommand{\fnum@figure}{Fig.~\thefigure}
\newcommand{\dd}{\mathrm d}
\newcommand{\abs}[1]{\left|#1\right|}
\newcommand{\hs}{\hspace{1pt}}
\newcommand{\R}{\mathbb{R}}
\newcommand{\X}{\mathbb{X}}
\newcommand{\N}{\mathbb{N}}
\newcommand{\E}{\mathbb{E}}
\renewcommand{\P}[1]{\mathbb{P}{\left\{#1\right\}}}
\newcommand{\1}[1][]{
	\ifx&#1&\mathds{1}
	\else\mathds{1}\left\{#1\right\}\fi}
\newcommand{\dto}{\overset{d}{\longrightarrow}}
\newcommand{\deq}{\overset{d}{=}}
\newcommand{\Pois}[1]{\mathsf{Pois}(#1)}
\newcommand{\Cc}{\mathcal{C}_{\eps,\delta}}
\newcommand{\Pc}{\mathcal{P}}
\newcommand{\Bc}{\mathcal{B}}
\newcommand{\Ic}{\mathcal{I}}
\newcommand{\Sym}[1]{\mathfrak{S}_{#1}}
\newcommand{\eps}{\varepsilon}
\DeclareMathOperator{\supp}{supp}
\DeclareMathOperator*{\argmin}{arg\,min}
\DeclarePairedDelimiter\floor{\lfloor}{\rfloor}
\newtheorem{theorem}{Theorem}[section]
\newtheorem{lemma}{Lemma}[section]
\newtheorem{proposition}{Proposition}[section]
\newtheorem{corollary}{Corollary}[section]
\theoremstyle{definition}
\theoremstyle{remark}
\newtheorem{remark}{Remark}[section]
\numberwithin{equation}{section}
\title{\bfseries Scaling limit for small blocks\\in the Chinese restaurant process}
\author{Oleksii Galganov and Andrii Ilienko}
\date{}
\begin{document}
	\maketitle
	\begingroup
	\renewcommand\thefootnote{}
	\footnotetext{2020 {\itshape Mathematics Subject Classification.} Primary: 60C05, 60G55; Secondary: 60F17.}
	\footnotetext{{\itshape Key words and phrases.} Chinese restaurant process; random partitions; random permutations; point processes; Poisson measures; vague convergence.}
	\footnotetext{The second-named author was supported by the Swiss National Science Foundation, grant no.~229505.}
	\addtocounter{footnote}{-2}
	\endgroup
	\vspace{-25pt}
	
	\subsection*{Abstract}
			The Chinese restaurant process is a basic sequential construction of consistent random partitions. 
			We consider random point measures describing the composition of small blocks in such partitions and show that their 
			scaling limit is given by the projective limit of certain inhomogeneous Poisson measures on cones of increasing dimension. 
			This result makes it possible to derive classical and functional limit theorems in the Skorokhod topology 
			for various characteristics of the Chinese restaurant process.
	
	\section{Introduction}
	The Chinese restaurant process (CRP), introduced by Dubins and Pitman in the early 1980s, is a preferential attachment algorithm for constructing consistent random 
	partitions (and associated permutations) on a single probability space; see Chapter 3 of \cite{P06}. The construction proceeds as follows. Fix a parameter $\theta>0$, 
	and initialize the process by placing the element $1$ in the first block. The element $2$ either starts a new block with probability $\frac{\theta}{\theta+1}$ or joins 
	the first block with probability $\frac{1}{\theta+1}$. After $n$ elements have been assigned to blocks, the $n+1$-th element either starts a new block with probability 
	$\frac{\theta}{\theta+n}$ or joins an existing block $B_k$ with probability $\frac{\abs{B_k}}{\theta+n}$, where $\abs{B_k}$ denotes the cardinality of block $B_k$. 
	This procedure generates a consistent sequence of random partitions of sets $[n]=\{1,\dots,n\}$ for $n\in \N$. 
	The process can also be adapted to generate a sequence of random permutations by interpreting each block of the partition as a cycle of the permutation.
	In this interpretation, each newly arriving element is inserted to the right of a uniformly chosen element within the selected cycle or starts a new cycle.
		
	The CRP and related processes find numerous applications, e.g., in infinite hidden Markov models, topic models, and network analysis, 
	as it enables flexible expansion of clusters or states with a continuous influx of new data \cite{BGR01,TJBB06,FSJW11}.
	A whole class of Markov algorithms generating random partitions and permutations, related to the CRP, was introduced in a recent study \cite{S24}.
	
	It is well known that, for each $n$, the random permutations constructed in this way follow the Ewens distribution:
	\begin{equation*}
		\P{\sigma_n = \pi} = \frac{\theta^{C(\pi)}} {\theta^{\overline n}}, 
		\qquad \pi\in\Sym n,
	\end{equation*}
	where $C(\pi)$ is the total number of cycles in $\pi$, 
	$\theta^{\overline n}=\theta(\theta+1)\ldots(\theta+n-1)$ stands for the rising factorial, 
	and $\Sym n$ denotes the symmetric group of order $n$. 
	This distribution and its counterpart for random partitions are based on the celebrated Ewens sampling formula, 
	which originated in population genetics and subsequently found wide applications across diverse fields; see \cite{Cr16a,Cr16b} and numerous references therein. 
	A comprehensive account of the properties of such permutations is provided in \cite{ABT03}. In particular, Theorem 5.1 demonstrates that
	\begin{equation}
		\label{eq:ascc}
		\bigl(C_k(\sigma_n), k\in\N\bigr) \dto \bigl(Z_k, k\in\N\bigr), \qquad n\to\infty,
	\end{equation}
	where $C_k$ denotes the number of cycles of size $k$, and $Z_k$ are independent random variables following
	$\mathsf{Pois}\bigl(\frac{\theta}{k}\bigr)$. Note that, for $\theta=1$, Ewens distribution becomes uniform on $\Sym n$.
	
	The CRP also admits an alternative, seemingly unrelated construction in terms of the Kingman paintbox process. In this setting, random partitions and permutations arise from an occupancy scheme with random probabilities generated by stick-breaking of the unit interval with $\mathsf{Beta}(\theta,1)$ factors \cite{DJ91,G04,GIM12}. This connects the present setting to the recent work \cite{DGM24} and, in part, to the second author’s paper \cite{I19}. The former studies an infinite occupancy scheme with fixed (and, in its final section, certain random) probabilities and proves that the suitably rescaled times at which a box receives its $r$th ball converge to a Poisson point process. The latter obtains similar results for an occupancy scheme with finitely many equiprobable boxes. However, as we emphasize below, these and related papers track only the counts of balls in the boxes rather than composition of the boxes.
	
	Some dynamic properties of random partitions arising in the CRP were studied in~\cite{GS23}. It was shown that this process exhibits rather irregular behavior in discrete time but can be regularized by embedding it into continuous time. By applying results from queueing theory, this approach yields a functional limit theorem for small block counts, with the limit being a certain time-changed stationary continuous-time Markov chain.
	Block counts for other variants of the CRP have also recently attracted attention in the literature. In~\cite{GW24}, limit theorems were established for linear combinations of block counts in the CRP with $(\alpha,\theta)$-seating for $\alpha>0$; the classical case corresponds to $\alpha=0$. Maximum block counts for the disordered CRP were investigated in~\cite{BMMU24}.
	
	In all the above papers, the study of block dynamics was limited exclusively to block counts viewed as (random) numerical functions of time. However, the very definition of the CRP suggests a more challenging problem: describing the dynamics of not merely the block counts but the entire composition of these blocks. Clearly, such a composition cannot be captured by scalar- or vector-valued random processes.
	In this paper, we study the limiting composition of small blocks in the CRP through the convergence of the corresponding random point measures. A related approach 
	was used in our previous paper \cite{GI24}, where the focus was on point measures describing the composition of short cycles in random permutations of fixed length, and 
	their vague convergence, as the length grows to infinity, to a homogeneous Poisson measure on a specially constructed metric space. However, in the static setting of 
	that study, the appearance of homogeneous Poisson measures in the limit was not particularly surprising due to the invariance of such random permutations under 
	relabelling. The situation here is quite different: when studying dynamic characteristics, inhomogeneity arises naturally, as the composition of blocks at future times 
	depends on their composition at earlier times. Surprisingly, with a proper construction of pre-limit measures, this inhomogeneity takes a remarkably simple form, 
	enabling the derivation of classical and functional limit theorems for a wide range of dynamic characteristics of the CRP far beyond block counts. A number of such theorems are given in Section \ref{sec4}.
	
	\section{Preliminaries and main result}\label{sec2}
	
	Let $\Pc_n$, $n\in\N$, denote the partition of $[n]$ formed at the $n$th step of the CRP. 
	For convenience, we list the elements within each block of $\Pc_n$ in ascending order and sort the blocks themselves by their smallest elements.
	Thus, $\Pc_1=\{\{1\}\}$, $\Pc_2=\{\{1\},\{2\}\}$ with probability $\frac{\theta}{\theta+1}$ and $\{\{1,2\}\}$
	with probability $\frac{1}{\theta+1}$, and so on. The growth of each block is described by the following scheme:
	\begin{equation}
		\label{eq:sch}
		\{k_1\} \longrightarrow \{k_1,k_2\} \longrightarrow \cdots
		\longrightarrow \{k_1,\ldots,k_N\} \longrightarrow
		\{k_1,\ldots,k_N,k_{N+1}\} \longrightarrow \cdots,
	\end{equation}
	where $1\le k_1<k_2<\ldots$, and the block $\{k_1\}$ appears at step $k_1$ with probability $\frac{\theta}{\theta+k_1-1}$,
	$k_2$ joins it at step $k_2$ with probability $\frac{1}{\theta+k_2-1}$, $k_3$ joins at step $k_3$ with probability
	$\frac{2}{\theta+k_3-1}$, and so on. Note that, by the Borel-Cantelli lemma, each block will a.s.~grow infinitely.
	Denote by $A(k_1,\ldots,k_N,k_{N+1})$ the random event indicating the existence of a block in the CRP that,
	up to the time $k_{N+1}$, evolves exactly as described in \eqref{eq:sch}; that is,
	\begin{equation}
		\label{eq:A}
		A(k_1,\ldots,k_N,k_{N+1}) = \bigl\{\{k_1,\ldots,k_N,k_{N+1}\}
		\in \Pc_{k_{N+1}}\bigr\}.
	\end{equation}
	
	Now fix $N\in\N$ and consider the structure and dynamics of all blocks up to the times when they reach size $N+1$.
	It is clear from \eqref{eq:sch} that it suffices to specify the elements $k_1,\ldots,k_N$ with which such a block
	eventually (or, more precisely, exactly at step $k_N$) reaches size $N$, and the time $m=k_{N+1}$
	when it gains one more element and thus stops being tracked.
	Hence, such structure and dynamics are uniquely determined by the infinite collection of events
	\begin{equation*}
		\{A(k_1,\ldots,k_N,m),1\le k_1<\ldots<k_N<m\},
	\end{equation*}
	or, equivalently, by the random point measure
	\begin{equation}
		\label{eq:Xi1}
		\Xi_1^{(N)} =
		\sum_{1\le k_1 < \ldots < k_N < m} \delta_{(k_1,\ldots,k_N,m)}
		\1_{A(k_1,\ldots,k_N,m)}.
	\end{equation}
	
	Introduce a cone
	\begin{equation}
		\label{eq:XN}
		\X_N = \left\{(x_1,\ldots,x_N,y) \in (0,+\infty)^{N+1}: x_1\le\ldots\le x_N\le y\right\}.
	\end{equation}
	Considering it as a measurable space, we equip it with the Borel $\sigma$-algebra $\Bc(\X_N)$, the measure $\mu^{(N)}$ defined by
	\begin{equation}
		\label{eq:mu}
		\dd\mu^{(N)} = \theta\,\frac{N!}{y^{N+1}}\,\dd x_1\ldots\dd x_N\,\dd y,
	\end{equation}
	and the localizing ring of bounded sets
	\begin{equation}
		\label{eq:loc}
		\mathcal{X}_N = \bigl\{B\in\Bc(\X_N):
		\text{$B$ is bounded away from zero and }
		\mu^{(N)}(B)<\infty\bigr\};
	\end{equation}
	for details on the latter, see, e.g., \cite[p.~19]{K17}. By scaling \eqref{eq:Xi1}, 
	define a sequence of random point measures $\Xi_n^{(N)}$, $n\in\N$, on $\left(\X_N,\Bc(\X_N)\right)$ as
	\begin{equation*}
		\Xi_n^{(N)} = 
		\sum_{1\le k_1 < \ldots < k_N < m} 
		\delta_{\left(\frac{k_1}{n},\ldots,\frac{k_N}{n},\frac{m}{n}\right)}
		\1_{A(k_1,\ldots,k_N,m)}.
	\end{equation*}
	
	The following theorem provides a complete description of the asymptotic structure and dynamics for blocks of size up to $N$.
	Recall that the vague topology on the space of locally finite 
	measures is generated by the integration maps 
	$\nu\mapsto\int_\X f\,\dd\nu$ for all continuous functions 
	$f$ with bounded support; see, e.g., Section 3.4 in \cite{R87} 
	or Chapter 4 in \cite{K17} for a general exposition.
	
	\begin{theorem}
		\label{TH:MAIN_TH}
		$\Xi_n^{(N)}$ vaguely converge in distribution as $n\to\infty$ to
		the Poisson random measure $\Xi^{(N)}$ on $\left(\X_N,\Bc(\X_N)\right)$ with
		intensity measure $\mu^{(N)}$.
	\end{theorem}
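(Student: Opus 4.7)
The plan is to verify the two conditions of Kallenberg's theorem (see, e.g., Chapter~4 of~\cite{K17}), namely that for every $B\in\mathcal{X}_N$ with $\mu^{(N)}(\partial B)=0$, the mean measure $\E[\Xi_n^{(N)}(B)]$ converges to $\mu^{(N)}(B)$ and the void probability $\P{\Xi_n^{(N)}(B)=0}$ converges to $e^{-\mu^{(N)}(B)}$.

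I would first derive an exact expression for the probability of a single event \eqref{eq:A}. Summing the Ewens partition probabilities over all partitions of $[m]$ containing a prescribed $(N+1)$-element block (using exchangeability and the fact that the normalising constant of the Ewens distribution on a set of size $m-N-1$ is $\theta^{\overline{m-N-1}}$) yields
\[ \P{A(k_1,\ldots,k_N,m)} = \frac{\theta\cdot N!}{\prod_{j=0}^{N}(\theta+m-1-j)}, \]
which depends only on $m$ and $N$, not on the indices $k_1,\ldots,k_N$. Substituting into the definition of $\E[\Xi_n^{(N)}(B)]$ and using the uniform estimate $\theta+m-1-j\sim m=ny$ valid on any $B$ bounded away from the origin, the mean measure becomes a Riemann sum that converges to $\int_B \theta\cdot N!\,y^{-(N+1)}\,\dd x_1\cdots\dd x_N\,\dd y=\mu^{(N)}(B)$, establishing the first Kallenberg condition.

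For the second condition, I would apply the inclusion--exclusion identity $\P{\Xi_n^{(N)}(B)=0}=\sum_{r\ge 0}(-1)^r\E[\Xi_n^{(N)}(B)^{(r)}]/r!$ to reduce the task to showing $\E[\Xi_n^{(N)}(B)^{(r)}]\to \mu^{(N)}(B)^r$ for each $r\ge 1$. Each factorial moment is a sum over ordered $r$-tuples of distinct index tuples. Tuples sharing any index contribute zero: by CRP consistency, the two corresponding $(N+1)$-blocks would have to coincide, forcing the index tuples to be equal. For disjoint-index tuples with sets $S_1,\ldots,S_r$ and maxima $m_i=\max S_i$, the joint probability $\P{\bigcap_i A^{(i)}}$ is again computable from the Ewens formula, by conditioning on the partition $\Pc_{\max_i m_i}$ at the latest time and summing over admissible blocks $B_i\supseteq S_i$ satisfying $B_i\cap[m_i]=S_i$.

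The hard part will be proving the asymptotic factorisation $\P{\bigcap_i A^{(i)}}=(1+o(1))\prod_i \P{A^{(i)}}$ uniformly over such tuples in $nB$. In the \emph{time-disjoint} regime where $\min S_{i+1}>\max S_i$ for every $i$, the CRP Markov property combined with the telescoping of the no-join factors yields exact independence $\P{\bigcap_i A^{(i)}}=\prod_i \P{A^{(i)}}$, since the CRP transition probabilities at each step depend only on block sizes and not on labels. In general, a direct analysis of the Ewens-based joint probability shows that the correction factor per tuple is $1+O(1/n)$, negligible after summation over the $O(n^{r(N+1)})$ contributing tuples. With factorial moments of every order converging to $\mu^{(N)}(B)^r$ and standard tail bounds justifying the exchange of summation and limit, the void probability converges to $e^{-\mu^{(N)}(B)}$, verifying Kallenberg's criterion and establishing the desired vague convergence $\Xi_n^{(N)}\vdto\PRM{\mu^{(N)}}$.
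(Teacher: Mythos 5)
Your proposal follows essentially the same route as the paper: Kallenberg's intensity-plus-avoidance criterion, the exact one-block probability $\theta\hs N!/(\theta+m-1)^{\underline{N+1}}$, the observation that only pairwise disjoint index tuples contribute to higher factorial moments, and reduction of the void probability to sums of $r$-fold joint probabilities via inclusion--exclusion (the paper uses Bonferroni truncations of both parities, which neatly sidesteps your need for ``standard tail bounds justifying the exchange of summation and limit''). The one substantive step you assert rather than prove --- the uniform asymptotic factorisation $\P{\bigcap_s A^{(s)}}=(1+o(1))\prod_s\P{A^{(s)}}$ --- is exactly the content of the paper's Lemma~\ref{lem:1}: a direct computation along the sequential construction gives the joint probability in closed form as $\theta^r\prod_s N!\,/\,(\theta+m^{(s)}-l^{(s)}-1)^{\underline{N+1}}$, where $l^{(s)}$ counts the interleaving of the tuples and satisfies $0\le l^{(s)}\le rN$; since $m^{(s)}\ge n y_U\to\infty$ on sets bounded away from zero, the multiplicative correction is indeed $1+O(1/n)$ uniformly, confirming your claim (and your remark that exact independence holds in the time-disjoint regime corresponds to the case $l^{(s)}=0$ for all $s$). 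Your proposed alternative derivation of the joint probability --- conditioning on $\Pc_{\max_i m_i}$ and summing Ewens probabilities over admissible blocks $B_i$ with $B_i\cap[m_i]=S_i$ --- is workable in principle but combinatorially much heavier than the paper's telescoping product over the steps of the construction. In short, every claim you make is correct and the architecture matches the paper's; the only gap is that the crux estimate is asserted rather than carried out.
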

	
	The measures $\mu^{(N)}$ for different $N$ are consistent in the following sense. Temporarily denoting $y$ in \eqref{eq:XN} by $x_{N+1}$, for $M>N$ and $B_N\in\Bc(\X_N)$, define
	\begin{equation*}
		B_{N\uparrow M} = \{(x_1,\ldots,x_{M+1}):
		(x_1,\ldots,x_{N+1})\in B_N,x_{N+1} \le \ldots \le x_{M+1}\}
		\in \Bc(\X_M).
	\end{equation*}
	Then
	\begin{align*}
		\mu^{(M)}(B_{N\uparrow M}) &= \int_{B_N}\dd x_1\ldots\dd x_{N+1}\int_{x_{N+1}}^{+\infty}\dd x_{N+2}\ldots
		\int_{x_{M-1}}^{+\infty}\dd x_M \int_{x_M}^{+\infty}\theta\,
		\frac{M!}{x_{M+1}^{M+1}}\,\dd x_{M+1}
		\\ &= \int_{B_N}\theta\,\frac{N!}{x_{N+1}^{N+1}}\,\dd x_1\ldots
		\dd x_{N+1}=\mu^{(N)}(B_N).
	\end{align*}

	Hence, the distributions of $\Xi^{(N)}$, $N\in\N$, are also consistent. Thus, we can define their projective limit $\Xi^{(\infty)}$.
	Similarly, for any $n\in\N$, we can define the projective limit $\Xi_n^{(\infty)}$ of $\Xi_n^{(N)}$.
	There is, however, a principal difference between these two projective limits. While the latter can be interpreted as
	the distribution of a random point measure on the infinite-dimensional cone of \emph{all} non-decreasing sequences
	with positive terms, representing the composition of \emph{all} blocks, the former is no longer the distribution of
	a Poisson measure on such a cone, since the right-hand side of \eqref{eq:mu} diverges as $N\to\infty$.
	Nevertheless, Theorem \ref{TH:MAIN_TH} can be equivalently stated as a result on the vague convergence of $\Xi_n^{(\infty)}$ to $\Xi^{(\infty)}$.
	
	\begin{remark}
		\label{rem:si}
		The limiting processes $\Xi^{(N)}$ are scale-invariant: $\Xi^{(N)}(B)\deq\Xi^{(N)}(cB)$ for any $c>0$ and $B\in\Bc(\X_N)$. This follows from the scale invariance of the intensity measure $\mu^{(N)}$.
	\end{remark}
	
	\section{Proof of Theorem \ref{TH:MAIN_TH}}\label{sec3}
	
	We will precede the proof with two auxiliary lemmas.
	As before, we write $[r]$ for $\{1,\ldots,r\}$ and $|B|$ for the cardinality of $B$.
	
	\begin{lemma}
		\label{lem:1}
		Let $r,N \in \N$, and $\bigl(k_1^{(s)},\dots,k_N^{(s)}, m^{(s)}\bigr)$, $s\in [r]$, be disjoint tuples of increasing positive integers. Denote
		\begin{equation}
			\label{eq:ls}
			l^{(s)} = \bigl|\bigl\{k_p^{(s')},p\in[N],s'\in[r]:
			k_p^{(s')}<m^{(s)}, m^{(s')} > m^{(s)}\bigr\}\bigr|.
		\end{equation}
		Then
		\begin{equation}
			\label{eq:joint_prob}
			\mathbb{P}\biggl\{\bigcap_{s=1}^r A\bigl(k_1^{(s)}, \dots, k_N^{(s)}, m^{(s)}\bigr)\biggr\} = 
			\theta^r\prod_{s=1}^r \frac{N!}{\left(\theta + m^{(s)} - l^{(s)} - 1\right)^{\underline{N+1}}},
		\end{equation}
		where the events $A$ are defined in \eqref{eq:A}, and $x^{\underline n} = x (x-1) \ldots (x-n+1)$ is the falling factorial.
	\end{lemma}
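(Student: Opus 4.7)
The plan is to compute $\P{\bigcap_s A^{(s)}}$ directly by running the CRP step by step up to time $M := \max_s m^{(s)}$, and then simplify the resulting product via a combinatorial identity. Setting $k_{N+1}^{(s)} := m^{(s)}$, call $t \in [M]$ \emph{tracked} if $t = k_j^{(s)}$ for some $s \in [r]$, $j \in [N+1]$, and \emph{free} otherwise.

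At each step $t$, the joint event either fully prescribes element $t$'s action (if tracked) or only forbids it from joining any currently active tracked block---any $s$ with $k_1^{(s)} \le t \le m^{(s)}$---if free. The probability of the required action for tracked $t = k_j^{(s)}$ is $\theta/(\theta + t - 1)$ when $j = 1$ and $(j-1)/(\theta + t - 1)$ when $j \ge 2$; for free $t$ it is $(\theta + t - 1 - L(t))/(\theta + t - 1)$, where
\[
L(t) := \bigl|\{(j, s) \in [N] \times [r] : k_j^{(s)} < t \le m^{(s)}\}\bigr|
\]
is the current total size of all active tracked blocks. Multiplying over $t \in [M]$ and noting that the denominators collect to $\theta^{\overline{M}}$ while the tracked numerators collect to $\theta^r (N!)^r$, I obtain
\[
\P{\bigcap_{s=1}^r A^{(s)}} = \frac{\theta^r (N!)^r \prod_{t \textup{ free}} (\theta + t - 1 - L(t))}{\theta^{\overline{M}}}.
\]

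To match this with \eqref{eq:joint_prob}, it then suffices to establish the multiset identity
\[
\{t - 1 - L(t) : t \textup{ free}\} \,\sqcup\, \bigsqcup_{s=1}^r \{m^{(s)} - l^{(s)} - 1 - i : 0 \le i \le N\} = \{0, 1, \ldots, M - 1\}.
\]
Set $f(t) := t - 1 - L(t)$. Elementary step-by-step accounting shows that $L(t+1) - L(t) \in \{+1, -N, 0\}$ according as $t$ is tracked as $k_j^{(s)}$ with $j \in [N]$, tracked as $m^{(s)}$, or free; hence $f$ is non-decreasing on $[M]$, strictly increases by $1$ at free steps, plateaus at each $k_j^{(s)}$-step, and jumps by $N+1$ at each $m^{(s)}$-step. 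A direct check yields $L(m^{(s)}) = l^{(s)} + N$, so $f(m^{(s)}) = m^{(s)} - l^{(s)} - N - 1$, and the block-$s$ contribution rewrites as $\{f(m^{(s)}), f(m^{(s)}) + 1, \ldots, f(m^{(s)}) + N\}$.

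The identity then follows from $f$'s jump-plateau structure: the free $f$-values are pairwise distinct (by the strict $+1$ increase at free steps), the $r$ block value sets are pairwise disjoint (each jump of size $N+1$ separates consecutive ones), and no free $f$-value coincides with any block value---the "skipped" values $f(m^{(s)}) + 1, \ldots, f(m^{(s)}) + N$ lie outside $\mathrm{Im}(f)$ by monotonicity, and $f(m^{(s)})$ is attained only at tracked positions. A cardinality count $|F| + r(N+1) = M$ then confirms the identity exhausts $\{0, 1, \ldots, M - 1\}$ exactly once. The main technical obstacle is this final bookkeeping---in particular ruling out coincidences between free $f$-values and the corner values $f(m^{(s)})$---which requires careful use of the monotonicity and jump structure of $f$.
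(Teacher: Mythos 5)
Your proof is correct and follows essentially the same route as the paper's: both factor the joint probability over the successive CRP steps, collect the tracked-step numerators into $\theta^r(N!)^r$ over the denominator $\theta^{\overline{M}}$, and track the active-block count (your $L(t)$ is the paper's $|L_j|$). The only difference is in the final cancellation, where the paper telescopes falling factorials locally between consecutive tracked times and then does a case analysis on $|L_j|-|L_{j-1}|$, while you establish the equivalent global multiset identity via the monotone step function $f$; both bookkeeping schemes are valid and yield \eqref{eq:joint_prob}.
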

	\begin{proof}
		Let $a_j$, $j\in[r(N+1)]$, denote the collection of $k_1^{(s)}, \dots, k_N^{(s)}, m^{(s)}$ for all $s\in[r]$, sorted in ascending order. For each $j$, define 
		\begin{equation}
			\label{eq:Kj}
			K_j = \begin{cases}
				\bigl\{k_1^{(s)}\bigr\}, &\text{if } a_j = k_1^{(s)}\text{\hspace{6.3pt}for some }s\in[r],\\
				\bigl\{k_1^{(s)},\ldots,k_{p-1}^{(s)}\bigr\},
				&\text{if } a_j = k_p^{(s)}\text{\hspace{6.3pt}for some }s\in[r]\text{ and }p\in[N]\setminus\{1\},\\
				\bigl\{k_1^{(s)}, \dots, k_N^{(s)}\bigr\},
				&\text{if } a_j = m^{(s)}\text{ for some }s\in[r].
			\end{cases}
		\end{equation}
		Additionally, let
		\begin{equation}
			\label{eq:Lj}
			L_j = \bigl\{k_p^{(s')},p\in[N],s'\in[r]: k_p^{(s')} \le a_j, m^{(s')}>a_j\bigr\}.
		\end{equation}
		In particular, it follows from \eqref{eq:ls} and \eqref{eq:Lj} that
		\begin{equation}
			\label{eq:ml}
			|L_j| = l^{(s)} \quad\text{if}\quad a_j=m^{(s)}.
		\end{equation}
		
		Let $\Ic_n$, $n\in\N$, be independent random variables distributed as
		\begin{equation}
			\label{eq:Idist}
			\P{\Ic_n = k} = \begin{cases}
				\frac{\theta}{\theta + n - 1}, & k = n, \\
				\frac{1}{\theta + n - 1}, & k \in [n-1].
			\end{cases}
		\end{equation}
		It follows from the construction of the CRP that element $n$ at time $n$ starts a new block if $\Ic_n=n$,
		and joins an already existing block containing element $\Ic_n$ otherwise.
		Then, the left-hand side of \eqref{eq:joint_prob} can be written as
		\begin{equation}
			\begin{aligned}
				\label{eq:joint_prob_expanded}
				\P{\Ic_{a_1}\in K_1}&\cdot\!\!\prod_{i={a_1+1}}^{a_2 - 1}\P{\Ic_i \notin L_1}\cdot\P{\Ic_{a_2}\in K_2}\cdot\ldots\\&\times\!\! \prod_{i=a_{r(N+1)-1}+1}^{a_{r(N+1)}-1}\!\!\P{\Ic_i
					\notin L_{r(N+1)-1}}\cdot\P{\Ic_{r(N+1)} \in K_{r(N+1)}}.
			\end{aligned}
		\end{equation}
		This is best explained with a specific example. Let $r=2$, $N=3$,
		\begin{equation}
			\label{eq:ex}
			\bigl(k_1^{(1)},k_2^{(1)},k_3^{(1)},m^{(1)}\bigr) = (3,7,11,19),\qquad
			\bigl(k_1^{(2)},k_2^{(2)},k_3^{(2)},m^{(2)}\bigr) = (6,12,21,24).
		\end{equation}
		Hence, we have $(a_1,\ldots,a_8)=(3,6,7,11,12,19,21,24)$,
		\begin{equation*}
			\begin{aligned}
				K_1 &= \{3\}, & K_2 &= \{6\}, & K_3 &= \{3\}, & K_4 &= \{3,7\}, \\
				K_5 &= \{6\}, & K_6 &= \{3,7,11\}, & K_7 &= \{6,12\}, & K_8 &= \{6,12,21\}
			\end{aligned}
		\end{equation*}
		by \eqref{eq:Kj}, and
		\begin{equation*}
			\begin{aligned}
				L_1 &= \{3\}, & L_2 &= \{3,6\}, & L_3 &= \{3,6,7\}, & L_4 &= \{3,6,7,11\}, \\
				L_5 &= \{3,6,7,11,12\}, & L_6 &= \{6,12\}, & L_7 &= \{6,12,21\}, & L_8 &= \varnothing
			\end{aligned}
		\end{equation*}
		by \eqref{eq:Lj}.
		Thus, to get the blocks \eqref{eq:ex} in $\Pc_{24}$, it is necessary to fall into $\{3\} = K_1$ at step $3$,
		avoid $\{3\} = L_1$ at steps $4$ and $5$, fall into $\{6\} = K_2$ at step $6$, avoid $\{3,6\} = L_2$
		strictly between steps $6$ and $7$ (there are no such steps), fall into $\{3\} = K_3$ at step $7$,
		avoid $\{3,6,7\}=L_3$ at steps $8$ to $10$, and so on.    
		
		It is straightforward from \eqref{eq:Idist} and the definitions of $K_j$, $L_j$, and $\Ic_{a_j}$ that
		\begin{gather}
			\label{eq:lemma_1_prod_1}
			\prod_{j=1}^{r(N+1)}\P{\Ic_{a_j} \in K_j}= 
			(\theta \cdot N!)^r \prod_{j=1}^{r(N+1)} \frac{1}{\theta + a_j - 1}, \\
			\prod_{i={a_j + 1}}^{a_{j+1} - 1} \P{\Ic_i \notin L_j}=
			\prod_{i={a_j + 1}}^{a_{j+1}-1} \left(
			1 - \frac{|L_j|}{\theta + i - 1}\right) = 
			\frac{(\theta + a_j - 1)^{\underline{|L_j|}}}{(\theta + a_{j+1} - 2)^{\underline{|L_j|}}}
		\end{gather}
		as $j\in[r(N+1)-1]$.

		By \eqref{eq:Lj}, $L_{r(N+1)} = \varnothing$. Setting additionally $L_0 = \varnothing$, we get
		\begin{equation}
			\prod_{j=1}^{r(N+1)-1} \prod_{i={a_j + 1}}^{a_{j+1} - 1} \P{\Ic_i \notin L_j} = 
			\!\!\prod_{j=1}^{r(N+1)} \frac{(\theta + a_j - 1)^{\underline{|L_j|}}}{(\theta + a_j - 2)^{\underline{|L_{j-1}|}}}
		\end{equation}
		by means of an index shift.        
		From the definition \eqref{eq:Lj}, it is easy to see that $|L_j|-|L_{j-1}|$ equals $1$ if $a_j=k_p^{(s)}$ and $-N$ if $a_j=m^{(s)}$ for some $s$ and $p$.
		It implies that
		\begin{equation}
			\label{eq:ff}
			\frac{(\theta + a_j - 1)^{\underline{|L_j|}}}{(\theta + a_j - 2)^{\underline{|L_{j-1}|}}} = 
			\begin{cases}
				\frac{\theta + a_j - 1}{\left(\theta + a_j - |L_j| - 1\right)^{\underline{N+1}}}, & a_j \in \left\{m^{(1)}, \dots, m^{(r)}\right\}, \\
				\theta + a_j - 1, & \text{otherwise}.
			\end{cases}
		\end{equation}        
		
		Combining all factors in \eqref{eq:joint_prob_expanded} and taking into account \eqref{eq:lemma_1_prod_1}\hs--\hs\eqref{eq:ff}, we obtain
		\begin{equation*}
			\mathbb{P}\biggl\{\bigcap_{s=1}^r A\bigl(k_1^{(s)}, \dots, k_N^{(s)}, m^{(s)}\bigr)\biggr\} = 
			\theta^r \!\!\prod_{a_j\in\{m^{(1)}, \dots, m^{(r)}\}}\frac{N!}{\left(\theta + a_j - |L_j| - 1\right)^{\underline{N+1}}}.
		\end{equation*}
		In view of \eqref{eq:ml}, this coincides with \eqref{eq:joint_prob}.
	\end{proof}
	
	\begin{lemma}
		\label{lem:2}
		Under the conditions of Lemma \ref{lem:1}, let $U\in\mathcal X_N$ be a finite union of closed convex sets. Then, as $n\to\infty$,
		\begin{equation}
			\label{eq:joint_prob_sum}
			\begin{aligned}
				\hspace{-7pt}\sum_{\substack{1\le k_1^{(s)}<\ldots<k_N^{(s)}<m^{(s)}\\\forall s\in [r]}}
				\hspace{-10pt}\mathbb{P}\biggl\{\bigcap_{s=1}^r A{\bigl(k_1^{(s)}, \ldots, k_N^{(s)}, m^{(s)}\bigr)}\biggr\}\cdot
				\mathds 1{\biggl\{\forall s\in [r]: \Bigl(\frac{k_1^{(s)}}{n},
					\ldots,\frac{k_N^{(s)}}{n}, \frac{m^{(s)}}{n}\Bigr)\in U\biggr\}}\\ 
				\to \left(\mu^{(N)}(U)\right)^r\qquad
				\text{as }n\to\infty,
			\end{aligned}
		\end{equation}
		where $\mu^{(N)}$ is given by \eqref{eq:mu}.
	\end{lemma}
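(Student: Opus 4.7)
The plan is to substitute the joint probability from Lemma \ref{lem:1}, replace each falling factorial by its leading asymptotic, and recognize the resulting expression as an $r$-fold Riemann sum for a product measure.

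I would begin with two uniform estimates over the summation range. From \eqref{eq:ls}, each $l^{(s)}\le rN$ is bounded by a constant depending only on $r$ and $N$. Since $U$ is bounded away from zero, there exists $\eps>0$ such that $(k_1^{(s)}/n,\ldots,m^{(s)}/n)\in U$ forces $m^{(s)}\ge\eps n$. Combining these facts,
\begin{equation*}
	\bigl(\theta+m^{(s)}-l^{(s)}-1\bigr)^{\underline{N+1}} = \bigl(m^{(s)}\bigr)^{N+1}\bigl(1+O(n^{-1})\bigr)
\end{equation*}
uniformly in the admissible tuples, so Lemma \ref{lem:1} reduces the left-hand side of \eqref{eq:joint_prob_sum} to
\begin{equation*}
	\bigl(1+o(1)\bigr)\sum_{\text{disjoint}} \theta^r \prod_{s=1}^r \frac{N!}{(m^{(s)})^{N+1}}\,\mathds{1}_U\!\Bigl(\tfrac{k_1^{(s)}}n,\ldots,\tfrac{m^{(s)}}n\Bigr).
\end{equation*}

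Next, I would compare this with the unrestricted $r$-fold sum (allowing coincidences between tuples), which factorizes as $S_n^r$, where
\begin{equation*}
	S_n = \sum_{\substack{1\le k_1<\ldots<k_N<m\\(k_1/n,\ldots,m/n)\in U}} \frac{\theta N!}{m^{N+1}}.
\end{equation*}
Any non-disjoint configuration imposes at least one equality among the $r(N+1)$ indices, reducing the degrees of freedom by one; hence the number of such configurations is $O(n^{r(N+1)-1})$, while each summand is $O(n^{-r(N+1)})$, so the non-disjoint contribution is $O(n^{-1})$. It follows that the disjoint sum equals $S_n^r$ up to $o(1)$.

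It remains to verify $S_n\to\mu^{(N)}(U)$. Rewriting $S_n = n^{-(N+1)}\sum \theta N!/(m/n)^{N+1}$ exhibits it as a Riemann sum of mesh $n^{-1}$ for $\int_U \theta N!\,y^{-(N+1)}\,dx_1\cdots dx_N\,dy=\mu^{(N)}(U)$. The indicator $\mathds{1}_U$ is Riemann-integrable because the boundary of a finite union of closed convex sets has Lebesgue measure zero, and the integrand is bounded on $U$ since $U$ is bounded away from zero. The only subtlety is that the localizing condition allows $U$ to be unbounded in the $y$-direction; this I would address by truncating at $\{y\le T\}$, bounding the tail of $S_n$ uniformly in $n$ by monotone comparison with $\theta N!\int_T^\infty y^{-(N+1)}\,dy$ (times the bounded volume of the $(x_1,\ldots,x_N)$-slice), and letting $T\to\infty$. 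This uniform tail control is the main technical point; once it is in place, taking the $r$-th power completes the proof.
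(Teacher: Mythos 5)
Your proposal is correct and follows essentially the same route as the paper: replace the falling factorials by $(m^{(s)})^{N+1}$ uniformly over the summation range (using $l^{(s)}\le rN$ and $m^{(s)}\ge y_U n$, which is exactly the paper's $(1\pm\eps)$ sandwich), recognize the result as a Riemann sum for $\mu^{(N)}$, and control the unbounded $y$-direction by truncation plus a uniform tail bound. The one point to tidy is that your $O(n^{r(N+1)-1})$ count of non-disjoint configurations presupposes the truncation $\{y\le T\}$ already being in force (for $U$ unbounded in $y$ the raw count is infinite), but since the sum in \eqref{eq:joint_prob_sum} ranges over disjoint tuples to begin with, this diagonal estimate is an extra verification rather than a needed step --- the paper instead reads the disjoint sum directly as an integral sum over $U^r$, the omitted diagonal being Lebesgue-null.
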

	\begin{proof}
		It follows from \eqref{eq:ls} that $0\le l^{(s)}\le rN$ for any $s\in[r]$. Hence, by Lemma \ref{lem:1} and the definition of $x^{\underline n}$, we have
		\begin{equation*}
			\theta^r\prod_{s=1}^r \frac{N!}{\left(\theta + m^{(s)}\right)^{N+1}} \le 
			\mathbb{P}\biggl\{\bigcap_{s=1}^r A\bigl(k_1^{(s)},\dots, k_N^{(s)}, m^{(s)}\bigr)\biggr\} \le 
			\theta^r\prod_{s=1}^r\frac{N!}{\left(\theta+m^{(s)}-(r+1)N- 1\right)^{N+1}}.
		\end{equation*}
		Since $\theta$, $r$, $N$ are fixed, it is easy to see that, for any $\eps>0$, there exists $M_\eps\in\N$ such that
		\begin{equation}
			\label{eq:bounds}
			(1-\eps)\cdot\theta^r\prod_{s=1}^r\frac{N!}{(m^{(s)})^{N+1}}\le 
			\mathbb{P}\biggl\{\bigcap_{s=1}^r A\bigl(k_1^{(s)},\dots, k_N^{(s)},\, m^{(s)}\bigr)\biggr\} \le 
			(1+\eps)\cdot\theta^r\prod_{s=1}^r\frac{N!}{(m^{(s)})^{N+1}},
		\end{equation}
		whenever $m^{(1)},\ldots,m^{(r)}\ge M_\eps$.
		Since $U$ is bounded away from zero, the $(N+1)$-th components of all elements in $U$ exceed some $y_U>0$.
		This means that the sum in \eqref{eq:joint_prob_sum} can only include tuples with $m^{(1)},\ldots,m^{(r)}>ny_U$.
		Thus, for any $\eps$, both bounds in \eqref{eq:bounds} hold for sufficiently large $n$.
		Hence, this sum is sandwiched between
		\begin{equation*}
			(1\pm\eps)\cdot\frac{1}{n^{N+1}} \sum_{\substack{1\le k_1^{(s)}<\ldots<k_N^{(s)}<m^{(s)}\\\forall s\in [r]}}
			\prod_{s=1}^r\frac{\theta\hs N!}{(m^{(s)}/n)^{N+1}}\,
			\1{\Bigl\{\forall s\in [r]: \Bigl(\frac{k_1^{(s)}}{n},\ldots,\frac{k_N^{(s)}}{n}, \frac{m^{(s)}}{n}\Bigr)\in U\Bigr\}}.
		\end{equation*}
		
		If $U$ is bounded in the Euclidean metric, then, by letting first $n\to\infty$ and then $\eps\to0$,
		we obtain the convergence of these integral sums to        
		\begin{equation*}
			\int_{U^r} \prod_{s=1}^r\biggl(\frac{\theta\hs N!}{(y^{(s)})^{N+1}}\biggr)
			\prod_{s=1}^r\,\dd x_1^{(s)}\ldots\,\dd x_N^{(s)}\,\dd y^{(s)} = 
			\biggl(\int_U\frac{\theta\hs N!}{y^{N+1}}\,\dd x_1\ldots \dd x_N\,\dd y\biggr)^r = 
			\bigl(\mu^{(N)}(U)\bigr)^r.
		\end{equation*}
		If $U$ is unbounded, then by the definition of $\X_N$, it is unbounded from above in its last component.
		Then, for a large $b$, $U$ can be divided by the hyperplane $\{y=b\}$ into the lower and upper parts $U_b^-$ and $U_b^+$.
		The set $U_b^-$ is bounded away from infinity, and the previous argument applies, while the pre-limit sums for $U_b^+$
		are uniformly small as $b\to\infty$ by \eqref{eq:bounds}. Alternatively, one can appeal to the fact that the decreasing
		function $y^{-(N+1)}$ is directly Riemann integrable, and hence the integral sums converge to the integral over the entire unbounded domain $U$.    
	\end{proof}
	
	\begin{proof}[Proof of Theorem \ref{TH:MAIN_TH}]
		Since any open subset of $\X_N$ is a countable union of closed convex sets from $\mathcal X_N$,
		and any set from $\mathcal X_N$ can be covered by finitely many such sets, the class of all sets $U$ from
		Lemma \ref{lem:2} forms a dissecting ring in the sense of \cite[p.~24]{K17}.
		Hence, by the well-known sufficient conditions for distributional vague convergence
		(see, e.g., Theorem 4.18 in the same source), it suffices to show that, for any such $U$,
		\begin{enumerate}[(i)]
			\item $\lim_{n\to\infty}\E\hs\Xi_n^{(N)}(U) = \E\hs\Xi^{(N)}(U)$,
			\item $\lim_{n\to\infty}\mathbb{P}\bigl\{\Xi_n^{(N)}(U)=0\bigr\} = \mathbb{P}\bigl\{\Xi^{(N)}(U)=0\bigr\}$,
		\end{enumerate}
		where $\E\hs\Xi^{(N)}(U) = \mu^{(N)}(U)$ and $\mathbb{P}\bigl\{\Xi^{(N)}(U)=0\bigr\} = \exp\bigl\{-\mu^{(N)}(U)\bigr\}$        
		by the definition of $\Xi^{(N)}$.
		
		(i) is nothing but the statement of Lemma \ref{lem:2} for $r=1$. To prove (ii), we note that        
		\begin{equation*}
			\mathbb{P}\bigl\{\Xi_n^{(N)}(U)=0\bigr\} = 
			1 - \mathbb{P}\biggl\{\bigcup_{(k_1/n,\ldots,k_N/n,m/n)\in U}\!\!\!A(k_1,\dots,k_N,m)\biggr\}.
		\end{equation*}
		Hence, by the Bonferroni's inequality, for any $R\in\N$, 
		\begin{equation*}
			\mathbb{P}\bigl\{\Xi_n^{(N)}(U)=0\bigr\}\le \sum_{r=0}^{2R}(-1)^r\!\!
			\sideset{}{^*}\sum_{\substack{
					\left(k_1^{(1)}/n,\ldots,k_N^{(1)}/n, m^{(1)}/n\right)\in U,\\[2pt]
					\cdot\;\;\cdot\;\;\cdot\\[2pt]
					\left(k_1^{(r)}/n,\ldots,k_N^{(r)}/n, m^{(r)}/n\right)\in U
			}}\!\!
			\mathbb{P}\biggl\{\bigcap_{s=1}^r A{\bigl(k_1^{(s)}, \ldots,k_N^{(s)},m^{(s)}\bigr)}\biggr\},
		\end{equation*}
		with a similar lower bound involving the sum $\sum_{r=0}^{2R-1}$. Here $\sum^*$ indicates that the inner sum
		is taken over all unordered sets of disjoint tuples. Thus, the inner sum is $r!$ times smaller than the sum
		in \eqref{eq:joint_prob_sum}. Therefore, it follows from Lemma \ref{lem:2} that
		\begin{align*}
			\sum_{r=0}^{2R-1}\frac{(-1)^r}{r!} \bigl(\mu^{(N)}(U)\bigr)^r&\le
			\liminf_{n\to\infty}\mathbb{P}\bigl\{\Xi_n^{(N)}(U)=0
			\bigr\}\\
			&\le\limsup_{n\to\infty}\mathbb{P}\bigl\{\Xi_n^{(N)}(U)=0\bigr\}            
			\le\sum_{r=0}^{2R}\frac{(-1)^r}{r!} \bigl(\mu^{(N)}(U)\bigr)^r
		\end{align*}
		for any $R\in\N$. Letting $R \to \infty$ yields
		\begin{equation*}
			\lim_{n\to\infty}\mathbb{P}\bigl\{\Xi_n^{(N)}(U)=0\bigr\} = \exp\bigl\{-\mu^{(N)}(U)\bigr\},
		\end{equation*}
		which proves (ii) and hence the theorem.
	\end{proof}
	
	\section{Limit theorems for characteristics of the CRP}\label{sec4}

	Theorem \ref{TH:MAIN_TH}, combined with the continuous mapping theorem, allows us to derive limit results
	for a variety of CRP characteristics with limits given in an explicit form. Note that, in the case of
	characteristics related solely to block counts, rather than to the specific composition of blocks,
	such limits can also be described implicitly by means of Theorem 6 in \cite{GS23} as corresponding functionals
	of certain time-changed stationary continuous-time Markov chains.
	
	\subsection{Limiting distributions of block counts}
	We begin with a result clarifying the asymptotic relationship between the block counts at times $n$ and $\floor{\alpha n}$, $\alpha>1$.
	
	\begin{proposition}
		\label{prop:counts}
		Fix $N\in\N$ and let $C_k(\Pc_n)$, $k\in [N]$, denote the number of blocks of size $k$ in $\Pc_n$. Then, for any $\alpha>1$, we have
		\begin{equation}
			\label{eq:nan-conv}
			\begin{aligned}
				\bigl(C_1(\Pc_n), C_2(\Pc_n), \ldots, C_N(\Pc_n);\;
				& C_1(\Pc_{\floor{\alpha n}}),
				C_2(\Pc_{\floor{\alpha n}}), \ldots,
				C_N(\Pc_{\floor{\alpha n}}) \bigr) \\ 
				\dto\Bigl(&
				\sum_{j=1}^N X_{1j} + X_{1,>N},
				\sum_{j=2}^N X_{2j} + X_{2,>N},\ldots,
				X_{NN} + X_{N,>N}; \\&
				X_{01} + X_{11},X_{02} + X_{12} + X_{22},\ldots,
				\sum_{i=0}^N X_{iN}\Bigr), \qquad n\to\infty,
			\end{aligned}
		\end{equation}
		where all $X_{ij}$, $1 \le j \le N$, $0 \le i \le j$, and $X_{i, >N}$, $1 \le i \le N$, are independent and Poisson distributed with means
		\begin{equation}
			\label{eq:lambda}
			\lambda_{ij} = \frac{\theta}{j}\hs\binom{j}{i} \bigl(\alpha^{-1}\bigr)^i
			\bigl(1-\alpha^{-1}\bigr)^{j-i} \quad \text{and} \quad
			\lambda_{i,>N} = \frac{\theta}{i}\hs I_{1-\alpha^{-1}}(N-i+1,i).
		\end{equation}
		Here
		\begin{equation*}
			I_x(a,b) = \frac{B_x(a,b)}{B(a,b)} =
			\frac{\int_0^xt^{a-1}(1-t)^{b-1}\,\dd t}
			{\int_0^1t^{a-1}(1-t)^{b-1}\,\dd t}, \qquad x\in[0,1],\, a,b>0,
		\end{equation*}
		stands for the normalized incomplete beta function.
	\end{proposition}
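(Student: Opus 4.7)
The plan is to represent each coordinate on the left-hand side of \eqref{eq:nan-conv} as the value of $\Xi_n^{(N)}$ on an explicit subset of $\X_N$, and then identify the limit via Theorem \ref{TH:MAIN_TH} combined with the continuous mapping theorem. Setting $x_0:=0$, $x_{N+1}:=y$, and $\alpha_n:=\floor{\alpha n}/n$, for $1\le j\le N$ and $0\le i\le j$, define
\[
V_{ij} := \bigl\{(x_1,\ldots,x_N,y)\in\X_N: x_i\le 1<x_{i+1},\; x_j\le\alpha<x_{j+1}\bigr\},
\]
and for $1\le i\le N$,
\[
V_{i,>N} := \bigl\{(x_1,\ldots,x_N,y)\in\X_N: x_i\le 1<x_{i+1},\; y\le\alpha\bigr\};
\]
let $V_{ij}^{(n)}$ and $V_{i,>N}^{(n)}$ denote the same sets with $\alpha$ replaced by $\alpha_n$. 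A block tracked by $\Xi_n^{(N)}$ through the tuple $(k_1/n,\ldots,k_N/n,m/n)$ has exactly $i$ elements at step $n$ and exactly $j\in[N]$ elements at step $\floor{\alpha n}$ iff this tuple lies in $V_{ij}^{(n)}$, and has $i$ elements at step $n$ while already exceeding size $N$ at step $\floor{\alpha n}$ iff it lies in $V_{i,>N}^{(n)}$. Consequently,
\[
C_i(\Pc_n)=\sum_{j=i}^{N}\Xi_n^{(N)}\bigl(V_{ij}^{(n)}\bigr)+\Xi_n^{(N)}\bigl(V_{i,>N}^{(n)}\bigr),\qquad C_j(\Pc_{\floor{\alpha n}})=\sum_{i=0}^{j}\Xi_n^{(N)}\bigl(V_{ij}^{(n)}\bigr).
\]

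Next I would verify that each $V_{ij}$ and $V_{i,>N}$ lies in $\mathcal X_N$: every such set forces $y\ge 1$ and, as the computation below confirms, has finite $\mu^{(N)}$-mass. They are pairwise disjoint, and their boundaries are contained in the $\mu^{(N)}$-null union of hyperplanes $\{x_\ell=1\}\cup\{x_\ell=\alpha\}\cup\{y=\alpha\}$, so they are continuity sets for $\Xi^{(N)}$. Theorem \ref{TH:MAIN_TH} together with the continuous mapping theorem then yields joint convergence of the family $\bigl(\Xi_n^{(N)}(V_{ij}),\Xi_n^{(N)}(V_{i,>N})\bigr)$ to independent Poisson variables with parameters $\mu^{(N)}(V_{ij})$ and $\mu^{(N)}(V_{i,>N})$. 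Since $\mu^{(N)}\bigl(V_{ij}^{(n)}\triangle V_{ij}\bigr)=O(1/n)$, the first-moment bound implicit in the proof of Lemma \ref{lem:2} for $r=1$, combined with Markov's inequality, shows that replacing the ``$(n)$''-versions by the limiting sets does not affect the limit.

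It remains to evaluate the Poisson means. Integrating \eqref{eq:mu} over $V_{ij}$ splits into ordered-simplex volumes $\frac{1}{i!}$, $\frac{(\alpha-1)^{j-i}}{(j-i)!}$ and $\frac{(y-\alpha)^{N-j}}{(N-j)!}$, times the $y$-integral $\int_\alpha^\infty y^{-(N+1)}(y-\alpha)^{N-j}\,\dd y$; the substitution $u=\alpha/y$ reduces this to $\alpha^{-j}B(j,N-j+1)$, and the identity $\frac{1}{j}\binom{j}{i}=\frac{1}{i}\binom{j-1}{i-1}$ then gives $\mu^{(N)}(V_{ij})=\lambda_{ij}$ exactly as in \eqref{eq:lambda}. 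For $V_{i,>N}$ an analogous computation uses $\int_1^\alpha y^{-(N+1)}(y-1)^{N-i}\,\dd y$; the substitution $t=1/y$ exposes the incomplete Beta function and delivers $\mu^{(N)}(V_{i,>N})=\lambda_{i,>N}$. Identifying $X_{ij}:=\Xi^{(N)}(V_{ij})$ and $X_{i,>N}:=\Xi^{(N)}(V_{i,>N})$ matches the right-hand side of \eqref{eq:nan-conv}.

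The main obstacle I expect is bookkeeping: keeping straight the many index cases, handling the boundary $j=N$ in which $y$ plays the role of $x_{j+1}$, and treating separately the tail case of blocks that have already exceeded size $N$ by step $\floor{\alpha n}$; on top of this, carefully justifying the $n$-dependent set approximation $V_{ij}^{(n)}\to V_{ij}$ via the finite-first-moment argument. Once these are in place, the Beta-integral manipulations themselves are routine.
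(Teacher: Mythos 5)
Your proof is correct and follows essentially the same route as the paper: your sets $V_{ij}$ and $V_{i,>N}$ are exactly the paper's $B_{ij}^{(N)}$ and $B_{i,>N}^{(N)}$, and the convergence is deduced from Theorem \ref{TH:MAIN_TH} via the continuous mapping theorem in the same way. The only (harmless) deviations are that the paper dispenses with the $\floor{\alpha n}/n$ versus $\alpha$ discrepancy by observing that the two sets coincide on the lattice $\supp\Xi_n^{(N)}\subset n^{-1}\mathbb{Z}^{N+1}$ rather than by a symmetric-difference estimate, and that it evaluates $\mu^{(N)}\bigl(B_{i,>N}^{(N)}\bigr)$ by subtracting $\sum_{j=i}^N\lambda_{ij}$ from the total mass $\theta/i$ known from \eqref{eq:ascc} instead of computing the incomplete Beta integral directly, which the paper explicitly notes is also possible.
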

	
	Multivariate distributions with dependent Poisson marginals, which are overlapping sums of
	independent Poisson random variables, like the one on the right-hand side of \eqref{eq:nan-conv},
	are common in the literature (see, e.g., Chapter 37 in \cite{JKB97} and references therein).
	
	\begin{proof}[Proof of Proposition \ref{prop:counts}]
		Let $k \in [N]$ and $\beta \in \{1, \alpha\}$, and define 
		\begin{equation*}
			G^{(N)}_{k, \beta} = \bigl\{(x_1,\ldots,x_N,y):
			0 < x_1 < \ldots < x_k \le \beta < x_{k+1} < \ldots < y \bigr\}.
		\end{equation*}
		Denote by $\supp$ the support of a point measure, that is, the set of its atoms. Since, by construction, $\supp\Xi_n^{(N)}\subset n^{-1}\mathbb{Z}^{N+1}$, we have \[\supp \Xi_n^{(N)}\cap G^{(N)}_{k,\beta}=\supp \Xi_n^{(N)}\cap G^{(N)}_{k,\lfloor\beta n\rfloor/n}.\]
		Indeed, the defining conditions of $G^{(N)}_{k,\beta}$ and $G^{(N)}_{k,\lfloor\beta n\rfloor/n}$ are easily seen to be equivalent on this support.

		By Theorem \ref{TH:MAIN_TH}, combined with the continuous mapping theorem, we have
		\begin{align*}
			C_k(\Pc_{\floor{\beta n}})=\,&\Xi_n^{(N)}\bigl(\bigl\{(x_1,\ldots,x_N,y):
			0 < x_1 < \ldots < x_k \le \tfrac{\floor{\beta n}}{n} < x_{k+1} < \ldots < y \bigr\}\bigr) \\
			=\,&\Xi_n^{(N)}\bigl(G^{(N)}_{k,\lfloor\beta n\rfloor/n}\bigr)
				=\,\Xi_n^{(N)}\bigl(G^{(N)}_{k, \beta}\bigr)
			\xrightarrow[n\to\infty]{d} \, \Xi^{(N)}\bigl(G^{(N)}_{k, \beta}\bigr),
		\end{align*}
		and this convergence holds jointly over all $k$ and $\beta$.
		Note that the sets $G^{(N)}_{k, \beta}$ are unbounded in the Euclidean metric but bounded
		in the sense of localization \eqref{eq:loc}, which justifies the application of the continuous mapping theorem. 
		
		Denote
		\begin{gather*}
			B_{ij}^{(N)} = \bigl\{(x_1,\ldots,x_N,y): 
			0 < x_1 < \ldots < x_i \le 1 < x_{i+1} < \ldots < x_j \le \alpha < x_{j+1} < \ldots < y \bigr\},\\
			B_{i,>N}^{(N)} = \bigl\{(x_1,\ldots,x_N,y): 
			0 < x_1 < \ldots < x_i \le 1 < x_{i+1} < \ldots < x_N < y \le \alpha \bigr\},
		\end{gather*}
		for $1 \le j \le N$, $0 \le i \le j$, and $1 \le j \le N$ respectively. These sets are disjoint, and
		\begin{equation*}
				G_{k,1}^{(N)} = \Bigl(\bigcup_{j=k}^N B_{k,j}^{(N)}\Bigr) \cup B_{k,>N}^{(N)}, \qquad
				G_{k,\alpha}^{(N)} = \bigcup_{i=0}^k B_{i,k}^{(N)}, \qquad k \in [N],
			\end{equation*}
		which yields \eqref{eq:nan-conv} for independent random variables
		\begin{equation*}
			X_{ij} = \Xi^{(N)}\bigl(B_{ij}^{(N)}\bigr) \sim \mathsf{Pois} \bigl(\mu^{(N)}\bigl(B_{ij}^{(N)}\bigr)\bigr),\qquad
			X_{i,>N} = \Xi^{(N)}\bigl(B_{i,>N}^{(N)}\bigr) \sim \mathsf{Pois} \bigl(\mu^{(N)}\bigl(B_{i,>N}^{(N)}\bigr)\bigr).
		\end{equation*}
		The only thing left to show is that
		\begin{gather}
			\label{eq:mul1}
			\mu^{(N)}\bigl(B_{ij}^{(N)}\bigr) = 
			\frac{\theta}{j} \hs \binom{j}{i} \bigl(\alpha^{-1}\bigr)^i \bigl(1-\alpha^{-1}\bigr)^{j-i},\\
			\mu^{(N)}\bigl(B_{i,>N}^{(N)}\bigr) = \frac{\theta}{i}\hs I_{1-\alpha^{-1}}(N-i+1,i).
			\label{eq:mul2}
		\end{gather}
		The equality \eqref{eq:mul1} results from
		\begin{align*}
			\mu^{(N)}\bigl(B_{ij}^{(N)}\bigr)
			=&\int_{0 < x_1 < \ldots < x_i \le 1}\!\!\!\!\dd x_1\ldots\dd x_i \cdot
			\int_{1 < x_{i+1} < \ldots < x_j \le \alpha}\!\!\!\! \dd x_{i+1}\ldots \dd x_j\\\times 
			&\int_{\alpha < x_{j+1} < \ldots < y}\!\!\!\! \theta\,\frac{N!}{y^{N+1}}\,\dd x_{j+1}\ldots\dd y
			=\frac{1}{i!} \cdot \frac{(\alpha-1)^{j-i}}{(j-i)!} \cdot \frac{\theta\hs(j-1)!}{\alpha^j},
		\end{align*}
		which agrees with the right-hand side of \eqref{eq:mul1}. To prove \eqref{eq:mul2},
		the corresponding integral could also be computed explicitly, but it is simpler to note that, by \eqref{eq:ascc}, the equalities
		\begin{equation*}
			\sum_{j=i}^N X_{ij} + X_{i,>N} \deq Z_i,\qquad i\in[N],
		\end{equation*}            
		must hold, where $Z_i \sim \Pois{\frac{\theta}{i}}$. Thus, 
		\begin{equation}\label{eq:suml}
			\E X_{i, >N} = \E Z_i - \sum_{j=i}^N \E X_{ij} = \frac{\theta}{i} - \sum_{j=i}^N \E X_{ij}
		\end{equation}
		where $\E X_{ij}$ is given by \eqref{eq:mul1}, and
		\begin{equation}
			\label{eq:suml-}
			\begin{aligned}
				\sum_{j=i}^N \E X_{ij} &= \theta
				\sum_{j=i}^N \frac{\alpha^{-i}}{j} \binom{j}{i} \bigl(1-\alpha^{-1}\bigr)^{j-i} =
				\theta \sum_{j=i}^N\frac{\alpha^{-i}}{i} \binom{j-1}{i-1} \bigl(1-\alpha^{-1}\bigr)^{j-i} \\ &=
				\frac{\theta}{i} \sum_{j=0}^{N-i}\binom{j+i-1}{j} \alpha^{-i}
				\bigl(1-\alpha^{-1}\bigr)^j = \frac{\theta}{i} \hs\P{X\le N-i},
			\end{aligned}
		\end{equation}
		where $X$ follows the negative binomial distribution with parameters $i$ and $\alpha^{-1}$.
		It is well known (see, e.g., eq.\ (5.31) in \cite{JKK05}) that the cumulative distribution function of $X$
		can be expressed in terms of the normalized incomplete beta function. Hence, by \eqref{eq:suml},
		\begin{equation*}
			\E X_{i, >N} = \frac{\theta}{i} - \frac{\theta}{i} \hs\P{X\le N-i} =
			\frac{\theta}{i} - \frac{\theta}{i} \hs I_{\alpha^{-1}}(i,N-i+1) =
			\frac{\theta}{i} \hs I_{1-\alpha^{-1}}(N-i+1,i),
		\end{equation*}
		which proves \eqref{eq:mul2}.
	\end{proof}
	
	Proposition \ref{prop:counts} can now be rewritten in a more natural infinite-dimensional form.
	
	\begin{corollary}
		For any $\alpha>1$,
		\begin{equation}
			\label{eq:nan-conv2}
			\bigl(C_k(\Pc_n), C_k(\Pc_{\floor{\alpha n}}), k\in\N\bigr) \dto 
			\Bigl(\sum_{j=k}^\infty X_{kj}, \sum_{i=0}^k X_{ik}, k\in\N\Bigr), \qquad n\to\infty,
		\end{equation}
		where $X_{ij}$ are independent and $\Pois{\lambda_{ij}}$-distributed with $\lambda_{ij}$ defined by the first equality in \eqref{eq:lambda}. Here the convergence in distribution is with respect to the product topology in $\R^\infty$.
	\end{corollary}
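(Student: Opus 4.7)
The plan is to reduce the corollary to Proposition \ref{prop:counts}. Convergence in distribution on $\R^\infty$ equipped with the product topology is equivalent to the convergence of all finite-dimensional marginals, so it suffices to fix an arbitrary $N\in\N$ and to prove the joint convergence of $\bigl(C_k(\Pc_n),C_k(\Pc_{\floor{\alpha n}})\bigr)$ for $k\in[N]$ to the $N$-truncation of the right-hand side of \eqref{eq:nan-conv2}.

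To match the two limits, I would start from an independent family $\{X_{ij}:1\le j<\infty,\,0\le i\le j\}$ with $X_{ij}\sim\Pois{\lambda_{ij}}$ and define the tail sums $\widetilde X_{i,>N}:=\sum_{j>N}X_{ij}$ for $i\in[N]$. The identity $\sum_{j=i}^\infty\lambda_{ij}=\theta/i$, extracted from the computation \eqref{eq:suml-} by letting $N\to\infty$, yields $\sum_{j>N}\lambda_{ij}=\frac{\theta}{i}\hs I_{1-\alpha^{-1}}(N-i+1,i)=\lambda_{i,>N}<\infty$, so each $\widetilde X_{i,>N}$ converges a.s.\ and is $\Pois{\lambda_{i,>N}}$-distributed. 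Since the tail sums for distinct $i$, together with the block $\{X_{ij}:1\le j\le N,\,0\le i\le j\}$, involve pairwise disjoint subfamilies of the $X_{ij}$, they are jointly independent.

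Substituting $\widetilde X_{i,>N}$ for $X_{i,>N}$ on the right-hand side of \eqref{eq:nan-conv} shows that the $N$-truncation of the right-hand side of \eqref{eq:nan-conv2} coincides in distribution with the limit vector produced by Proposition \ref{prop:counts}. That proposition then yields the required convergence for the first $N$ coordinates, and since $N$ was arbitrary, the corollary follows. The only real point to verify is the Poisson tail-sum identification $\sum_{j>N}\lambda_{ij}=\lambda_{i,>N}$, and this is already implicit in \eqref{eq:suml}\hs--\hs\eqref{eq:suml-}; the rest is a routine passage from finite-dimensional convergence to product-topology convergence.
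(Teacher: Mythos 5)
Your proposal is correct and follows essentially the same route as the paper: reduce to finite-dimensional convergence, invoke Proposition \ref{prop:counts}, and identify $X_{i,>N}$ in distribution with the tail sum $\sum_{j>N}X_{ij}$ via the identity $\lambda_{i,>N}=\sum_{j>N}\lambda_{ij}$ extracted from \eqref{eq:suml}\hs--\hs\eqref{eq:suml-}. Your explicit remark that the tail sums and the finite block involve pairwise disjoint subfamilies, and are therefore jointly independent, is a detail the paper leaves implicit but is exactly the right justification.
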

	\begin{proof}
		It follows from \eqref{eq:suml-} that $\sum_{j=i}^\infty\lambda_{ij}=\frac{\theta}{i}$. Hence, by \eqref{eq:suml},
		\begin{equation}
			\label{eq:ltail}
			\lambda_{i,>N} = \sum_{j=N+1}^\infty\lambda_{ij}.
		\end{equation}
		
		It is well known that to prove convergence in distribution in a space of sequences, it suffices
		to show finite-dimensional convergence. The latter follows from \eqref{eq:nan-conv}
		 and the fact that $X_{i,>N}\deq\sum_{j=N+1}^\infty X_{ij}$, which results from \eqref{eq:ltail}.
	\end{proof}
	
	\subsection{Functional limit theorems}
	We now turn to functional limit theorems in the Skorokhod space and start with block counts.
	By Theorem 6 in \cite{GS23}, the vector-valued processes $\bigl(C_k(\Pc_{\floor{nt}}), k\in [N], t\ge1\bigr)$
	converge in distribution in the $J_1$ topology on $D\bigl([1,+\infty),\R^N\bigr)$ to a certain c\`adl\`ag process
	$\bigl(X_k(t), k \in [N], t\ge1\bigr)$, which can be described as follows.    
	Let $\bigl(Y_k(t), k\in\N, t\ge0 \bigr)$ be a continuous-time homogeneous Markov chain on the space
	$\bigl\{(y_k, k\in\N)\in\mathbb{Z}_+^\infty: \sum_{k=1}^\infty y_k<\infty\bigr\}$ with transition rates
	\begin{equation}
		\label{eq:rates}
		\begin{aligned}
			\theta \quad &\text{for } (y_1,y_2,\ldots) && \to \;\;\; (y_1+1,y_2,\ldots),\\
			ky_{k} \quad &\text{for } (y_1,y_2,\ldots) && \to \;\;\; (y_1,\ldots,y_k-1,y_{k+1}+1,\ldots), \qquad k\ge2,
		\end{aligned}
	\end{equation}
	and independent $Y_k(0)\sim\mathsf{Pois}\bigl(\frac{\theta}{k}\bigr)$. In view of the latter,
	the chain is in steady state; see Theorem 2 in \cite{GS23}. Then $X_k(t)=Y_k(\log t)$, $k\in [N]$, $t\ge1$.
	
	The right-hand side of \eqref{eq:nan-conv2} describes the distribution of $\bigl(X_k(1), X_k(\alpha), k \in \N\bigr)$.
	In a similar way, the entire system of finite-dimensional distributions of the process $(X_k(t),k\in\N, t\ge 1)$ can be derived,
	but the result will be quite involved and not easily tractable. In this sense, Theorem \ref{TH:MAIN_TH},
	which fully describes the block dynamics, is not only more general due to accounting for the composition of blocks
	rather than just their counts, but also encodes this dynamics much more efficiently through the use of random point measures.
	
	Let us now focus on singleton counts. It follows from \eqref{eq:rates} that $Y_1$ is a stationary
	birth-and-death process with birth rate $\lambda_i=\theta$ and death rate $\mu_i=i$, that is,
	it describes the standard $M/M/\infty$ queue in steady state.
	Theorem 6 in \cite{GS23} establishes the convergence of singleton counts to the process $Y_1(\log t)$.
	The following proposition demonstrates how, bypassing the mentioned theorem, one can use Theorem \ref{TH:MAIN_TH}
	to easily prove the convergence of singleton counts and, moreover, constructively describe the limiting process.
	To state it, recall that, for $N=1$, the limiting random measure $\Xi^{(1)}$ in Theorem \ref{TH:MAIN_TH} is
	Poisson on $\{(x,y): 0\le x\le y\}$ with intensity measure $\frac{\theta}{y^2}\,\dd x\,\dd y$.
	
	\begin{proposition}
		\label{prop:fpc}
		Let
		\begin{equation*}
			X_1(t) = \Xi^{(1)}\bigl((0, t]\times(t,+\infty)\bigr), \qquad t>0,
		\end{equation*}
		$r\in\N$, $0 = t_0 < t_1 < \ldots < t_r < t_{r+1} = +\infty$, 
		and $\lambda'_{ij}=\theta(t_i-t_{i-1})(t_j^{-1}-t_{j+1}^{-1})$, $i,j\in[r]$, with $\infty^{-1}=0$ by convention. Then
		\begin{enumerate}[(i)]
			\item the singleton counting processes $\bigl(C_1(\Pc_{\floor{nt}}), t>0\bigr)$ converge as
			$n\to\infty$ to $X_1$ in distribution in the $J_1$ topology on $D\bigl((0, +\infty)\bigr)$,
			\item $\bigl(X_1(t_m),m\in[r]\bigr) \deq \bigl(\sum_{i=1}^m\sum_{j=m}^r X'_{ij}, m\in[r]\bigr)$ with independent
			$X'_{ij}\sim\Pois{\lambda'_{ij}}$,
			\item the finite-dimensional distributions of $X_1$ are defined by their multivariate probability generating function
			\begin{equation*}
				\E\prod_{m=1}^r z_m^{X_1(t_m)} = \exp{\Bigl\{\sum_{1\le i \le j \le r} \lambda'_{ij}(z_iz_{i+1}\ldots z_j-1)\Bigr\}}.
			\end{equation*}
		\end{enumerate}            
	\end{proposition}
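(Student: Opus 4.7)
My plan is to handle part (ii) first, deduce (iii) immediately from it, and then establish (i), which is the main obstacle: Theorem \ref{TH:MAIN_TH} gives only vague convergence of the underlying point measures, which does not automatically upgrade to $J_1$ convergence of the singleton-counting processes.

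For (ii), I would partition $\X_1$ using the grid $0 = t_0 < t_1 < \ldots < t_r < t_{r+1} = +\infty$ by introducing the rectangles
\[
B'_{ij} = (t_{i-1}, t_i] \times (t_j, t_{j+1}], \qquad 1 \le i \le j \le r,
\]
which are pairwise disjoint subsets of $\X_1$ and, with the conventions $t_0 = 0$ and $t_{r+1} = +\infty$, satisfy
\[
(0, t_m] \times (t_m, +\infty) = \bigsqcup_{i = 1}^{m} \bigsqcup_{j = m}^{r} B'_{ij}, \qquad m \in [r].
\]
Since $\Xi^{(1)}$ is Poisson, the variables $X'_{ij} := \Xi^{(1)}(B'_{ij})$ are independent and $\Pois{\mu^{(1)}(B'_{ij})}$-distributed; a one-line integration of the density $\theta/y^2$ gives $\mu^{(1)}(B'_{ij}) = \theta(t_i - t_{i-1})(t_j^{-1} - t_{j+1}^{-1}) = \lambda'_{ij}$, proving (ii). Part (iii) then follows by rearranging
\[
\prod_{m=1}^{r} z_m^{X_1(t_m)} = \prod_{1 \le i \le j \le r} (z_i z_{i+1} \cdots z_j)^{X'_{ij}},
\]
taking expectation, and invoking independence together with the Poisson pgf identity $\E w^{X'_{ij}} = \exp(\lambda'_{ij}(w - 1))$.

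For (i), I would first record the pathwise identity $C_1(\Pc_{\floor{nt}}) = \Xi_n^{(1)}\bigl((0, t] \times (t, +\infty)\bigr)$ for every $t > 0$, which follows from the definition of $\Xi_n^{(1)}$ and the fact that its atoms lie on the lattice $n^{-1}\mathbb{Z}^2$. Since convergence in $D\bigl((0, +\infty)\bigr)$ reduces to convergence in $D([a, b])$ for every compact $[a, b] \subset (0, +\infty)$, I fix such an interval and introduce the localizing set
\[
V_{a, b} = \{(x, y) \in \X_1 : x \le b, \; y > a\} \in \mathcal{X}_1,
\]
which has finite $\mu^{(1)}$-measure and captures precisely the atoms of $\Xi^{(1)}$ relevant for $X_1$ on $[a, b]$. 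By Theorem \ref{TH:MAIN_TH}, the restrictions $\Xi_n^{(1)}|_{V_{a,b}}$ converge weakly to $\Xi^{(1)}|_{V_{a,b}}$ as random finite point measures, and I would invoke Skorokhod's representation theorem to work on a common probability space where this convergence is almost sure. Since $\mu^{(1)}$ is absolutely continuous, the limit is a.s.\ a simple configuration with all coordinates distinct and none on $\partial V_{a, b}$, so the atoms of $\Xi_n^{(1)}|_{V_{a,b}}$ can be matched coordinatewise to those of $\Xi^{(1)}|_{V_{a,b}}$. The jumps of $X_1^{(n)}$ on $[a, b]$ occur exactly at the $x$- and $y$-coordinates of these atoms, so convergence of atom locations transfers directly to $J_1$ convergence of processes. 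The most delicate point, which I would handle by enlarging $V_{a, b}$ slightly and using the a.s.\ absence of limiting atoms on the intermediate region, is ensuring that no atom of $\Xi_n^{(1)}$ drifts across the boundary of $V_{a, b}$ as $n \to \infty$.
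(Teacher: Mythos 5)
Your proposal is correct and follows essentially the same route as the paper: the same pathwise identity $C_1(\Pc_{\floor{nt}})=\Xi_n^{(1)}\bigl((0,t]\times(t,+\infty)\bigr)$ for (i), and the identical disjoint-rectangle decomposition $(0,t_m]\times(t_m,+\infty)=\bigcup_{i\le m}\bigcup_{j\ge m}(t_{i-1},t_i]\times(t_j,t_{j+1}]$ for (ii) and (iii). The only difference is cosmetic: where you upgrade atom convergence to $J_1$ convergence by hand via Skorokhod representation and atom matching on compacts, the paper simply invokes Lemma 2.12 of Xia on weak convergence of purely jump-type processes with jumps of size $\pm1$, together with a Billingsley-type reduction from $D\bigl((0,+\infty)\bigr)$ to $D\bigl([\eps,+\infty)\bigr)$.
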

	
	\begin{proof}
		Fix an $\eps>0$. The pre-limit and limiting processes $\bigl(C_1(\Pc_{\floor{nt}}), t\ge\eps\bigr)$ and
		$\bigl(X_1(t), t\ge\eps\bigr)$ are c\`adl\`ag and purely jump-type with jumps of size $\pm1$.
		By Lemma 2.12 in \cite{X92}, to establish convergence in $D\bigl([\eps,+\infty)\bigr)$,
		it suffices to show that the jump times and sizes of the pre-limit processes jointly converge
		in distribution to those of the limiting process. Since
		\begin{equation*}
			C_1(\Pc_{\lfloor nt\rfloor})
				=\Xi_n^{(1)}\bigl(\{(x,y):0<x\le\tfrac{\lfloor nt\rfloor}n<y\}\bigr)
				=\Xi_n^{(1)}\bigl(\{(x,y):0<x\le t<y\}\bigr),
		\end{equation*}
		the latter, in turn, follows from Theorem \ref{TH:MAIN_TH} together with the interpretation of vague convergence of measures as convergence of their atoms; see Theorem 3.13 in \cite{R87}.
		As this holds for any $\eps>0$, the convergence in $D\bigl((0,+\infty)\bigr)$ follows by analogy
		with Theorem 16.7 in \cite{B99}, which establishes a similar result for infinity instead of zero.
		
		To prove (ii), note that in the representation 
		\begin{equation}
			\label{eq:X1repr}
			\bigl(X_1(t_m),m\in[r]\bigr) = \bigl(\Xi^{(1)}\bigl((0,t_m]\times(t_m,+\infty)\bigr), m\in[r]\bigr),
		\end{equation}
		the equality
		\begin{equation}
			\label{eq:unun}
			(0,t_m]\times(t_m,+\infty) = \bigcup_{i=1}^m\bigcup_{j=m}^r T_{ij}, \qquad m\in[r],
		\end{equation}
		holds for $T_{ij}=(t_{i-1},t_i]\times(t_j,t_{j+1}]$, $1\le i\le j\le r$, and all $T_{ij}$ are disjoint; see Fig.\ \ref{fig:T}.
		\begin{figure}[t]
			\begin{minipage}{0.5\textwidth}
				\centering
				\begin{tikzpicture}[scale=1.4,>=stealth]
					\fill[gray!30] (0,1.6) -- (1.6,1.6) -- (1.6,4) -- (0,4) -- cycle;
					\draw[thin] (0,1.6) -- (1.6,1.6);
					\draw[thin] (1.6,1.6) -- (1.6,4);
					\draw[very thin,dashed] (1.6,0) -- (1.6,1.6);
					\node[left] at (0,1.6) {\footnotesize $t$};
					\node[below] at (1.6,0) {\footnotesize $t$};
					\draw[->] (0,0) -- (4,0) node[below] {$x$};
					\draw[very thick,->] (0,0) -- (0,4) node[left] {$y$};
					\draw[very thick] (0.01,0.01) -- (4,4);
					\foreach \x/\y in {
						0.089/0.116,
						0.371/1.060,
						0.043/0.225,
						0.053/2.664,
						0.139/0.174,
						1.520/3.840,
						0.270/0.392,
						0.033/0.820,
						0.109/0.155,
						0.291/0.454,
						1.732/2.062,
						0.032/0.187,
						0.084/0.261,
						0.648/1.322,
						0.031/0.134,
						0.495/0.761,
						0.436/0.507,
						0.170/0.377,
						0.609/2.767,
						0.282/1.484,
						0.105/0.145,
						0.222/0.252,
						0.029/0.112,
						0.284/0.546,
						0.115/0.150,
						0.154/1.185,
						0.650/0.707,
						2.428/3.326,
						0.069/0.120,
						0.670/1.718,
						0.031/0.078}
					{\fill[black] (\x,\y) circle (1pt);}
				\end{tikzpicture}
				\caption{The representation of $X_1(t)$.}
			\end{minipage}
			\begin{minipage}{0.5\textwidth}
				\centering
				\begin{tikzpicture}[scale=1.4,>=stealth]
					\fill[gray, opacity=0.3] (0,1) -- (1,1) -- (1,4) -- (0, 4);
					\draw[semithick] (0,1) -- (1,1);
					\draw[semithick] (1,1) -- (1,4);
					\draw[very thin,dashed] (1,0) -- (1,1);
					\node[left] at (0,1) {\footnotesize $t_1$};
					\node[below] at (1,0) {\footnotesize $t_1$};
					\fill[gray, opacity=0.3] (0,1.6) -- (1.6,1.6) -- (1.6,4) -- (0, 4);
					\draw[semithick] (0,1.6) -- (1.6,1.6);
					\draw[semithick] (1.6,1.6) -- (1.6,4);
					\draw[very thin,dashed] (1.6,0) -- (1.6,1.6);
					\node[left] at (0,1.6) {\footnotesize $t_2$};
					\node[below] at (1.6,0) {\footnotesize $t_2$};
					\fill[gray, opacity=0.3] (0,2.5) -- (2.5,2.5) -- (2.5,4) -- (0, 4);
					\draw[semithick] (0,2.5) -- (2.5,2.5);
					\draw[semithick] (2.5,2.5) -- (2.5,4);
					\draw[very thin,dashed] (2.5,0) -- (2.5,2.5);
					\node[left] at (0,2.5) {\footnotesize $t_3$};
					\node[below] at (2.5,0) {\footnotesize $t_3$};
					\fill[gray, opacity=0.3] (0,3) -- (3,3) -- (3,4) -- (0, 4);
					\draw[semithick] (0,3) -- (3,3);
					\draw[semithick] (3,3) -- (3,4);
					\draw[very thin,dashed] (3,0) -- (3,3);
					\node[left] at (0,3) {\footnotesize $t_4$};
					\node[below] at (3,0) {\footnotesize $t_4$};
					\draw[->] (0,0) -- (4,0) node[below] {$x$};
					\draw[very thick,->] (0,0) -- (0,4) node[left] {$y$};
					\draw[very thick] (0.01,0.01) -- (4,4);
					\node at (0.5,1.3) {$T_{11}$};
					\node at (0.5,2.05) {$T_{12}$};
					\node at (0.5,2.75) {$T_{13}$};
					\node at (0.5,3.5) {$T_{14}$};
					\node at (1.3,2.05) {$T_{22}$};
					\node at (1.3,2.75) {$T_{23}$};
					\node at (1.3,3.5) {$T_{24}$};
					\node at (2.05,2.75) {$T_{33}$};
					\node at (2.05,3.5) {$T_{34}$};
					\node at (2.75,3.5) {$T_{44}$};
				\end{tikzpicture}
				\caption{The sets $T_{ij}$.}
				\label{fig:T}
			\end{minipage}
		\end{figure}
		Here, we interpret $(t_r,t_{r+1}]$ as $(t_r,+\infty)$. Denote
		$X'_{ij}=\Xi^{(1)}(T_{ij})$; they are independent and Poisson distributed with means
		\begin{equation}
			\label{eq:l'}
			\E X'_{ij} = \mu^{(1)}(T_{ij}) = \int_{t_{i-1}}^{t_i} \int_{t_j}^{t_{j+1}}\frac{\theta}{y^2}\,\dd x\,\dd y 
			= \theta(t_i-t_{i-1})(t_j^{-1}-t_{j+1}^{-1}) = \lambda'_{ij}.
		\end{equation}
		Hence, (ii) follows from \eqref{eq:X1repr} and \eqref{eq:unun}.
		
		Now, since $\E\hs z^{X'_{ij}}=
		\mathrm e^{\lambda'_{ij}(z-1)}$, we have
		\begin{equation*}
			\E\prod_{m=1}^rz_m^{X_1(t_m)}=\E\prod_{m=1}^r z_m^{\sum_{i=1}^m\sum_{j=m}^rX'_{ij}} 
			= \E\!\prod_{1\le i\le j\le r}\Bigl(\prod_{m=i}^j z_m\Bigr)^{X'_{ij}}
			= \prod_{1\le i\le j\le r}\mathrm e^{\lambda'_{ij}(z_i z_{i+1}\ldots z_j-1)},
		\end{equation*}
		which proves (iii).
	\end{proof}
	
	We now turn to the question of the first singleton in
	$\Pc_{\floor{nt}}$ or, equivalently, the smallest fixed point of $\sigma_{\floor{nt}}$. Let
	\begin{equation*}
		M_n = \min\{l\le n: \{l\} \in \Pc_n\}, \qquad n\in\N,
	\end{equation*}
	and set $M_n=n+1$ if there are no singletons in $\Pc_n$.
	
	For each $t>0$, if there are any atoms of $\Xi^{(1)}$ in $(0,t]\times(t,+\infty)$, let $L(t)$ be
	the $x$-coordinate of the leftmost such atom. Such a leftmost atom exists since $\Xi^{(1)}$ is a.s.~finite on this set. If there are no atoms in that set, define $L(t)=t$. The process $(L(t),t>0)$ is clearly
	non-decreasing and c\`adl\`ag; see Fig.~\ref{fig:L}.
	
	\begin{proposition}~
		\label{prop:ML}
		\begin{enumerate}[(i)]
			\item The processes $\bigl(\frac{M_{\floor{nt}}}{n},t>0\bigr)$ converge as
			$n\to\infty$ to $L$ in distribution in the $J_1$ topology on $D\bigl((0, +\infty)\bigr)$.
			\item The finite-dimensional distributions of $L$ are given as follows. For $r\in\N$,
			let $0<t_1<\ldots<t_r<t_{r+1}=+\infty$ and $0\le x_1\le\dots\le x_r$. Then
		\end{enumerate}
		\begin{equation}
			\label{eq:prop_min_tdf}
			\mathbb{P} \bigl\{L(t_m)>x_m, m\in[r]\bigr\} = 
			\exp\Bigl\{-\theta\sum_{m=1}^r x_m (t_m^{-1}-t_{m+1}^{-1})\Bigr\} \cdot \1\bigl\{x_m<t_m,m\in[r]\bigr\}.
		\end{equation}
	\end{proposition}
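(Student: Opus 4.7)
The plan is to leverage Theorem \ref{TH:MAIN_TH} in the same spirit as in the proof of Proposition \ref{prop:fpc}, representing $M_{\lfloor nt\rfloor}/n$ as an explicit functional of $\Xi_n^{(1)}$. The key observation is that a singleton $\{l\}$ in $\Pc_{\lfloor nt\rfloor}$ corresponds bijectively to an atom of $\Xi_n^{(1)}$ at some $(l/n, m/n)$ with $l/n \le \lfloor nt\rfloor/n < m/n$. By the discretization argument already used in Proposition \ref{prop:fpc}, this region coincides, on the support of $\Xi_n^{(1)}$, with $(0,t]\times(t,+\infty)$. Hence $M_{\lfloor nt\rfloor}/n$ equals the smallest $x$-coordinate among the atoms of $\Xi_n^{(1)}$ inside this region, with the convention $(\lfloor nt\rfloor+1)/n$ when there are none. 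Both $(M_{\lfloor nt\rfloor}/n, t>0)$ and $L$ are non-decreasing c\`adl\`ag step processes, and $L$ is defined as the same functional applied to $\Xi^{(1)}$.

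For (i), I would fix $\eps>0$ and work first on $[\eps,+\infty)$. The set $\{(x,y)\in\X_1: y\ge\eps\}$ has finite $\mu^{(1)}$-mass, so $\Xi^{(1)}$ has a.s. finitely many atoms in it, and by Theorem \ref{TH:MAIN_TH} together with the atom-convergence interpretation of vague convergence (Theorem 3.13 in \cite{R87}) these atoms are the limits of the atoms of $\Xi_n^{(1)}$. The map that sends such a finite configuration to the monotone step function ``leftmost active $x$-coordinate as $t$ varies'' is continuous at the limiting configuration on the almost-sure event that no two atoms share an $x$- or $y$-coordinate. A criterion for $J_1$ convergence of monotone jump processes analogous to Lemma 2.12 of \cite{X92} then yields convergence in $D([\eps,+\infty))$; letting $\eps\to0$ exactly as in Proposition \ref{prop:fpc} extends this to $D((0,+\infty))$.

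For (ii), note that $L(t_m)>x_m$ is equivalent to $x_m<t_m$ together with the absence of atoms of $\Xi^{(1)}$ in $(0,x_m]\times(t_m,+\infty)$. Hence, on $\{x_m<t_m,m\in[r]\}$, the joint event amounts to $\Xi^{(1)}$ having no atoms in $U=\bigcup_{m=1}^r (0,x_m]\times(t_m,+\infty)$. I would decompose $U$ into the disjoint rectangles $T'_{ij}=(x_{i-1},x_i]\times(t_j,t_{j+1}]$ for $1\le i\le j\le r$ (with $x_0=0$ and $t_{r+1}=+\infty$), much like the decomposition in Proposition \ref{prop:fpc}, and apply the Poisson property to get $\mathbb{P}\{\Xi^{(1)}(U)=0\}=\exp\{-\mu^{(1)}(U)\}$. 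Starting from $\mu^{(1)}(T'_{ij})=\theta(x_i-x_{i-1})(t_j^{-1}-t_{j+1}^{-1})$, telescoping the inner sum over $j$ and performing Abel summation over $i$ produces the claimed exponent $\theta\sum_{m=1}^r x_m(t_m^{-1}-t_{m+1}^{-1})$.

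The main obstacle is the Skorokhod convergence in part (i): the ``leftmost active $x$-coordinate'' functional is not continuous on the whole space of locally finite measures, so one must carefully restrict to bounded parts of $\X_1$ and rule out degenerate configurations (coinciding coordinates or atoms on the moving boundary). I expect that the monotonicity of both $(M_{\lfloor nt\rfloor}/n)$ and $L$, combined with the $\eps$-truncation argument already established in Proposition \ref{prop:fpc}, will make this step largely routine.
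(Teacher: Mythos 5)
Your treatment of part (ii) is correct and is essentially the paper's argument: for $x_m<t_m$ the event $\{L(t_m)>x_m,\ m\in[r]\}$ is the void event of $\Xi^{(1)}$ on $\bigcup_{m=1}^r(0,x_m]\times(t_m,+\infty)$, and your disjoint-rectangle decomposition with Abel summation gives exactly the exponent in \eqref{eq:prop_min_tdf} (the paper leaves this computation to the reader, citing the analogy with \eqref{eq:l'}).

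Part (i), however, has a genuine gap. You assert that $L$ is a non-decreasing c\`adl\`ag \emph{step} process and propose to conclude via a criterion for $J_1$ convergence of monotone \emph{jump} processes analogous to Lemma 2.12 of \cite{X92}. But $L$ is not purely jump-type: whenever $\Xi^{(1)}$ has no atoms in $(0,t]\times(t,+\infty)$ --- an event of probability $\mathrm e^{-\theta}>0$ for each fixed $t$, since $\mu^{(1)}\bigl((0,t]\times(t,+\infty)\bigr)=\theta$ --- the definition forces $L(t)=t$, so with positive probability the path of $L$ contains nondegenerate diagonal segments (clearly visible in Fig.~\ref{fig:L}). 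No jump-process convergence criterion applies to such a limit, and this, rather than coinciding coordinates or atoms on the moving boundary, is the real obstacle; the paper explicitly flags it. The paper's workaround is to first replace the default value $t$ (resp.\ roughly $nt$) by $0$, obtaining purely jump-type processes $L'$ and $M'_{\floor{n\cdot}}/n$ to which the Lemma 2.12 argument of Proposition \ref{prop:fpc} does apply; then, after passing to an a.s.\ convergent version via Skorokhod representation, it applies the map $g\mapsto g\hs\1\{g\ne0\}+t\hs\1\{g=0\}$, which is $J_1$-continuous on paths with values in $\{0\}\cup(\delta,+\infty)$, thereby restoring the diagonal segments; finally $M$ and the resulting $M''$ differ by at most $1$. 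Some such detour (or a direct, and considerably more delicate, continuity argument for the configuration-to-path map that handles the diagonal pieces) is indispensable; the step you describe as ``largely routine'' is precisely where the work lies.
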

	
	Note that, due to the monotonicity of $L$, it indeed suffices to specify the probabilities
	on the left-hand side of \eqref{eq:prop_min_tdf} only for non-decreasing $x_m$.
	
	\begin{figure}[t]
		\begin{minipage}{0.5\textwidth}
			\centering
			\begin{tikzpicture}[scale=1.7,>=stealth]
				\draw[->] (0,0) -- (3,0) node[below] {$x$};
				\draw[very thick,->] (0,0) -- (0,3) node[left] {$y$};
				\draw[very thick] (0.01,0.01) -- (3,3);
				\foreach \x/\y in {
					0.023/0.046,
					0.047/0.072,
					0.089/0.116,
					0.371/1.06,
					0.043/0.225,
					0.139/0.174,
					0.27/0.392,
					0.033/0.82,
					0.109/0.125,
					0.291/0.454,
					2.132/2.767,
					0.032/0.187,
					0.084/0.261,
					0.648/1.322,
					0.031/0.134,
					0.495/0.761,
					0.436/0.507,
					0.17/0.377,
					0.282/1.484,
					0.115/0.145,
					0.222/0.252,
					0.029/0.112,
					0.284/0.546,
					0.135/0.15,
					0.154/1.185,
					0.65/0.707,
					0.079/0.12,
					0.67/1.718}
				{\fill[black] (\x,\y) circle (1pt);}
			\end{tikzpicture}
		\end{minipage}
		\begin{minipage}{0.5\textwidth}
			\centering
			\begin{tikzpicture}[scale=1.7,>=stealth,
				spy/.style={%
					draw,gray!75,line width=0.75pt,circle,inner sep=0pt,}]
				\def\spyviewersize{2cm}
				\def\spyonclipreduce{0.7pt}
				\def\pik{
					\draw[->] (0,0) -- (3,0) node[below] {$t$};
					\draw[->] (0,0) -- (0,3) node[left] {$L(t)$};
					\draw[dash pattern=on 0.3pt off 0.3pt,semithick] (0,0) -- (0.01,0.01);
					\foreach \x/\y/\z in {
						0.010/0.010/0.029,
						0.029/0.029/0.112,
						0.112/0.031/0.134,
						0.134/0.032/0.187}
					{\draw[thick] (\x,\y) -- (\z-0.005,\y);
						\draw[densely dashed] (\x,\y) -- (\x,0);};
					\foreach \x/\y/\z in {
						0.187/0.033/0.820,
						0.820/0.154/1.185,
						1.185/0.282/1.484,
						1.484/0.670/1.718}
					{\draw[thick, arrows = {-Latex[width=0pt 3, length=5pt]}] (\x,\y) -- (\z,\y);
						\draw[densely dashed] (\x,\y) -- (\x,0);};
					
					\draw[thick, arrows = {-Latex[width=0pt 4, length=5pt]}] (2.132,2.132) -- (2.767,2.132);
					\draw[thick] (1.718,1.718) -- (2.132,2.132);
					\draw[densely dashed] (1.718,1.718) -- (1.718,0);
					\draw[thick] (2.767,2.767) -- (3,3);
					\draw[densely dashed] (2.767,2.767) -- (2.767,0);
				}
				\pik
				\def\spyfactorI{11}
				\coordinate (spy-on 1) at (0.15,0.03);
				\coordinate (spy-in 1) at (0.9,2);
				\begin{scope}
					\clip (spy-in 1) circle (0.3*\spyviewersize-\spyonclipreduce);
					\pgfmathsetmacro\sI{1/\spyfactorI}
					\begin{scope}[shift={($\sI*(spy-in 1)-\sI*(spy-on 1)$)},scale around={\spyfactorI:(spy-on 1)}]
						\pik
						\foreach \x/\y/\z in {
							0.029/0.029/0.112,
							0.112/0.031/0.134,
							0.134/0.032/0.187}
						{\draw[thick, arrows = {-Latex[width=0pt 3, length=4pt]}] (\x,\y) -- (\z,\y);}
					\end{scope}
				\end{scope}
				\node[spy,minimum size={\spyviewersize/\spyfactorI}] (spy-on node 1) at (spy-on 1) {};
				\node[spy,minimum size=\spyviewersize] (spy-in node 1) at (spy-in 1) {};
				\draw [spy] (spy-on node 1) -- (spy-in node 1);
			\end{tikzpicture}
		\end{minipage}
		\caption{A typical sample of atoms of $\Xi^{(1)}$ with the corresponding sample path of $L$.}
		\label{fig:L}
	\end{figure}
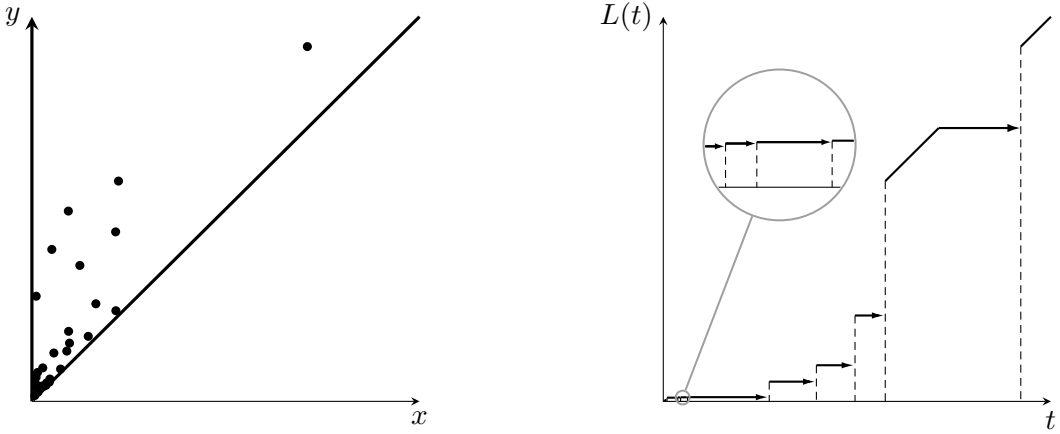
	
	\begin{proof}[Proof of Proposition \ref{prop:ML}]
		As in the proof of Proposition~\ref{prop:fpc}, it suffices to establish convergence in $D\bigl([\eps,+\infty)\bigr)$ for a fixed $\eps>0$. Since the limiting process $L$ is no longer purely jump-type, we cannot directly apply Lemma 2.12 in \cite{X92} as before and therefore proceed with a somewhat refined argument. It is worth mentioning that convergence in the weaker $M_1$ topology follows immediately from the finite-dimensional convergence, implied by Theorem~\ref{TH:MAIN_TH}, and monotonicity of the pre-limit processes (see, e.g., Corollary~12.5.1 in~\cite{W02}).
			
		Define $M'_{\floor{nt}}$ and $M''_{\floor{nt}}$ by the same rule as $M_{\floor{nt}}$, 
		except that $M'_{\floor{nt}}=0$ and $M''_{\floor{nt}}=nt$ whenever $\Pc_{\floor{nt}}$ contains no singletons. 
		Similarly, define $L'(t)$ as $L(t)$ but with value $0$ instead of $t$ if there are no atoms of $\Xi^{(1)}$ in 
		$(0,t]\times(t,+\infty)$. Unlike $L$, $L'$ is purely jump-type but fails to be non-decreasing.
		The convergence of $\Bigl(\frac{M'_{\floor{nt}}}{n}, t\ge\eps\Bigr)$ to $L'$ in $D\bigl([\eps,+\infty)\bigr)$ 
		follows from the same argument as in the proof of Proposition \ref{prop:fpc}. By Skorokhod’s representation theorem, we may and do place the processes on a common probability space so that this convergence holds a.s.
						
		Fix a $\delta>0$. Consider the set
		\[\Cc=\bigl\{x\in D\bigl([\eps,+\infty)\bigr)\colon x(t)\in\{0\}\cup(\delta,+\infty)\text{ for all }t\ge\eps\bigr\}\]
		and the mapping from $\Cc$ to $D\bigl([\eps,+\infty)\bigr)$ given by
		\begin{equation}
		\label{eq:map}
		\bigl(g(t),t\ge\eps\bigr)\mapsto
		\bigl(g(t) \hs \1\{g(t)\ne0\} + t\hs\1\{g(t)=0\}, t\ge\eps\bigr).
		\end{equation}
		Hence, $\frac{M'_{\floor{n\cdot}}}{n} \mapsto \frac{M''_{\floor{n\cdot}}}n$ and $L'\mapsto L$
		for all $\omega$ such that $L'$, and thus $\frac{M'_{\floor{n\cdot}}}{n}$ eventually, lie in $\Cc$.
		Since, by construction, this necessary holds for some $\delta(\omega)>0$, and the map~\eqref{eq:map} is $J_1$-continuous, it follows that $\tfrac{M''_{\floor{n\cdot}}}n$ converges to $L$ in $D\bigl([\eps,+\infty)\bigr)$.			
		As
		\begin{equation*}
			\frac{M''_{\floor{nt}}}{n} \le
			\frac{M^{\vphantom a}_{\floor{nt}}}{n} \le
			\frac{M''_{\floor{nt}}}{n} +\frac{1}{n},
		\end{equation*}
		(i) follows.
		
		To prove (ii), first note that
		$\mathbb{P}\bigl\{L(t_m)>x_m\bigr\} = 0$ for
		$x_m\ge t_m$ due to $L(t_m)\le t_m$. For $x_m < t_m$, denote $U_m = (0,x_m]\times(t_m, +\infty)$ and observe that
		\begin{align*}
			\mathbb{P}\Bigl\{L(t_m) > x_m, m\in[r]\Bigr\} = \mathbb{P}\Bigl\{\Xi^{(1)}\Bigl(\bigcup_{m=1}^r U_m\Bigr) = 0\Bigr\}
			= \exp\Bigl\{-\mu^{(1)}\Bigl(\bigcup_{m=1}^rU_m\Bigr)\Bigr\}.
		\end{align*}
		A calculation similar to \eqref{eq:l'} shows that the right-hand side here is the same as in \eqref{eq:prop_min_tdf}.
	\end{proof}
	
	\subsection{Short-lived singletons}

	We now examine singletons with a short lifetime. First of all, we must exclude from consideration singletons
	born in the early stages of CRP formation, as in the initial \lq inflationary expansion\rq\ phase of the process,
	their birth and growth proceed at a singularly fast rate. From a mathematical perspective, this can be explained
	by the scale invariance of the measure $\Xi^{(1)}$: its structure on short time intervals near zero is just
	as irregular as on long ones in later stages; see Remark \ref{rem:si}. Formally, for any $0 < \delta_0 < \delta$,
	the number of singletons born between $\floor{\delta_0 n}+1$ and $\floor{\delta n}$ is
	$\Xi^{(1)}_n\bigl(\{(x,y): \delta_0 < x \le \delta\}\bigr)$,
	thus converging to a Poisson random variable with mean $\log\frac{\delta}{\delta_0}$.
	
	For $n\in\N$ and $\delta>0$, denote by $S_{\delta,n}$ the birth time of the singleton born at or
	after $\floor{\delta n}$ that transformed into a doubleton in the shortest time among all such singletons,
	and by $T_{\delta,n}$ its lifetime.
	We now describe the joint asymptotics of $\bigl(S_{\delta,n},T_{\delta,n}\bigr)$ as $n\to\infty$.        
	
	\begin{proposition}
		\label{prop:ST}
		Let $\bigl(S_{\delta},T_{\delta}\bigr)$ be a random vector with density
		\begin{equation}
			\label{eq:ST}
			f(s,t) = \frac{\theta}{(s+t)^2}\Bigl(1+\frac{t}{\delta}\Bigr)^{-\theta}, \qquad s\ge\delta,\,t\ge0.
		\end{equation}
		Then
		\begin{equation}
			\label{eq:STconv}
			\biggl(\frac{S_{\delta,n}}{n},\frac{T_{\delta,n}}{n}\biggr) \dto
			\bigl(S_{\delta},T_{\delta}\bigr), \qquad n\to\infty.
		\end{equation}
	\end{proposition}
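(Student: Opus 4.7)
The plan is to reinterpret $(S_{\delta,n},T_{\delta,n})$ in terms of $\Xi_n^{(1)}$, transfer the convergence via Theorem \ref{TH:MAIN_TH} with $N=1$, and then read off the joint density from the limiting Poisson measure. I apply the diffeomorphism $\Phi\colon(x,y)\mapsto(s,t)=(x,y-x)$ from $\X_1$ onto $[0,\infty)^2$, which pushes $\mu^{(1)}$ forward to $\tilde\mu(ds\,dt)=\theta(s+t)^{-2}\,ds\,dt$. An atom $(k/n,m/n)$ of $\Xi_n^{(1)}$ represents a singleton born at time $k$ that becomes a doubleton at time $m$, so its $\Phi$-image $(k/n,(m-k)/n)$ encodes its rescaled birth time and lifetime. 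Hence $(S_{\delta,n}/n,T_{\delta,n}/n)$ is precisely the atom of the pushforward $\tilde\Xi_n^{(1)}:=\Phi_*\Xi_n^{(1)}$ with smallest $t$-coordinate among atoms with $s\ge\floor{\delta n}/n$, which for large $n$ agrees with $s\ge\delta$ on the lattice support.

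By Theorem \ref{TH:MAIN_TH} and the continuous mapping theorem, $\tilde\Xi_n^{(1)}$ converges vaguely in distribution to $\tilde\Xi^{(1)}:=\Phi_*\Xi^{(1)}$, a PRM with intensity $\tilde\mu$. Let $(S_\delta,T_\delta)$ denote the atom of $\tilde\Xi^{(1)}$ restricted to $\{s\ge\delta\}$ with minimal $t$-coordinate. It is a.s.\ uniquely defined, because for each $t_0>0$ the slab $\{s\ge\delta,\,0\le t\le t_0\}$ has finite $\tilde\mu$-measure $\theta\log(1+t_0/\delta)$ and its $\Phi^{-1}$-preimage lies in the localizing ring $\mathcal X_1$. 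The convergence $(S_{\delta,n}/n,T_{\delta,n}/n)\dto(S_\delta,T_\delta)$ then follows by a continuous-mapping argument on atoms. The main obstacle is making this step rigorous: the argmin is taken over the unbounded strip $\{s\ge\delta\}$, while vague convergence only directly controls relatively compact sets. I would address this, as in the proof of Proposition \ref{prop:ML}, by a truncation argument: choose $s_0$ so large that $\P{S_\delta>s_0,\,T_\delta\le t_0}$ is arbitrarily small, and on the bounded rectangle $[\delta,s_0]\times[0,t_0]$, use a.s.-distinctness of $t$-coordinates and the absence of atoms on the boundary to upgrade vague convergence to convergence of the argmin.

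It remains to compute the joint law of $(S_\delta,T_\delta)$. The projection of $\tilde\Xi^{(1)}|_{\{s\ge\delta\}}$ to the $t$-axis is an inhomogeneous Poisson process with rate $\int_\delta^\infty\theta(s+t)^{-2}\,ds=\theta/(\delta+t)$, so $T_\delta$, being its first point, has survival function $\P{T_\delta>t}=\exp\{-\theta\log(1+t/\delta)\}=(1+t/\delta)^{-\theta}$ and density $f_{T_\delta}(t)=\theta(\delta+t)^{-1}(1+t/\delta)^{-\theta}$. Conditionally on $T_\delta=t$, the location $S_\delta$ has density proportional to $(s+t)^{-2}$ on $[\delta,\infty)$, i.e.\ $(\delta+t)(s+t)^{-2}$. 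Multiplying yields $f(s,t)=\theta(s+t)^{-2}(1+t/\delta)^{-\theta}$, matching \eqref{eq:ST}.
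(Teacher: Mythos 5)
Your argument is correct, and it reaches \eqref{eq:ST} by a genuinely different computation than the paper's. For the convergence \eqref{eq:STconv}, both you and the paper reduce everything to Theorem \ref{TH:MAIN_TH} and treat the unboundedness of the strip $\{s\ge\delta\}$ only briefly (the paper via a one-line appeal to ``similar arguments'', you via a sketched truncation in $s$, which is sound since $\int_{s_0}^{\infty}\int_0^{t_0}\theta(s+t)^{-2}\,\dd t\,\dd s=\theta\log(1+t_0/s_0)\to0$ as $s_0\to\infty$). The real divergence is in identifying the law of $(S_\delta,T_\delta)$. The paper writes $g(S_\delta,T_\delta)$ as an integral of an indicator against $\Xi^{(1)}$ and applies the Mecke equation, which produces the joint density in a single step as the intensity $\theta(s+t)^{-2}\,\dd s\,\dd t$ multiplied by the avoidance probability $\exp\bigl\{-\theta\log(1+t/\delta)\bigr\}$. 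You instead project the pushforward process onto the lifetime axis, observe that this projection is inhomogeneous Poisson with rate $\theta/(\delta+t)$, obtain the marginal of $T_\delta$ from its survival function, and recover $S_\delta$ through the conditional density $\lambda(t,s)/\Lambda(t)$ of the mark of the first arrival; multiplying gives the same $f$. The two computations are equivalent---your marginal-times-conditional factorization is exactly the Mecke identity specialized to the first point of a marked Poisson process---but the Mecke route handles an arbitrary test function $g$ formally in one stroke, whereas yours leans on the disintegration of the first arrival of a marked Poisson process, a standard fact that should be cited explicitly (e.g.\ the marking theorem in \cite{LP18}). Both treatments also quietly use that the minimizing atom is a.s.\ unique in the limit, which follows from absolute continuity of the intensity; you at least flag this, and your identification of the pre-limit atom with $(k/n,(m-k)/n)$ and the harmless $\floor{\delta n}/n$ versus $\delta$ boundary discrepancy are handled correctly.
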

	
	Equation \eqref{eq:ST} implies that $T_\delta$ follows a Pareto-type distribution with density 
	$\frac{\theta}{\delta}\bigl(1+\frac{t}{\delta}\bigr)^{-\theta-1}$, $t\ge0$,
	while the marginal density of $S_\delta$ is more intricate and can be expressed in terms of hypergeometric functions.
	
	\begin{proof}[Proof of Proposition \ref{prop:ST}]
		Letting 
		$\Delta = \bigl\{(x,y)\in\X_1: x \ge\delta\bigr\}$, define $T_\delta$ as $\min\,(y-x)$ over 
		$(x,y)\in\Delta\cap\supp\Xi^{(1)}$, and $S_\delta$ as the $x$-coordinate of the corresponding $\argmin$. 
		By similar arguments to those in the previous propositions, \eqref{eq:STconv} follows from Theorem \ref{TH:MAIN_TH}.    
			
		We will now prove \eqref{eq:ST}. For any Borel function $g:[\delta,+\infty)\times[0,+\infty)\to[0,+\infty)$, we have
		\begin{align*}
			&g(S_\delta,T_\delta) \\&=\hspace{-10pt}\sum_{(x,y)\in\Delta\cap\supp\Xi^{(1)}}\hspace{-10pt}
			g(x,y-x)\,\mathds{1}\bigl\{
			y-x\le y'-x'\;\;\forall(x',y') \in\Delta\cap\supp\Xi^{(1)}
			\bigr\}\\
			&= \int_\Delta g(x,y-x)\,\mathds{1}\bigl\{
			y-x\le y'-x'\;\;\forall(x',y') \in\Delta\cap\supp\Xi^{(1)}
			\bigr\}\,\Xi^{(1)}(\dd x,\dd y).
		\end{align*}
		Note that, in the above sum, a.s.~only one summand is nonzero. 
		Now, by the Mecke equation (see, e.g., Theorem 4.1 in \cite{LP18}),
		\begin{equation}
			\begin{aligned}
				\label{eq:EgST}
				\E g(S_\delta,T_\delta)&=\int_\Delta g(x,y-x)\,\\&\times\mathbb P
				\bigl\{y-x\le y'-x'\;\forall (x',y')\in\Delta\cap\supp\Xi^{(1)}
				\bigr\}\,\mu^{(1)}(\dd x,\dd y).
			\end{aligned}
		\end{equation}
		Let $B = \{(x',y')\in\X_1: x'\ge\delta, y'<x'+y-x\}$. Then
		the probability under the integral sign equals $\mathbb{P}\{\Xi^{(1)}(B)=0\}=\exp\{-\mu^{(1)}(B)\}$, where
		\begin{equation*}
			\mu^{(1)}(B) = \int_{\delta}^{+\infty}\dd x'\int_{x'}^{x'+y-x}\!\!\frac{\theta}{(y')^2}\,\dd y'
			= \theta\int_\delta^{+\infty}\biggl(\frac{1}{x'} - \frac{1}{x'+y-x}\biggr)\,\dd x'
			= \theta \log\Bigl(1+\frac{y-x}{\delta}\Bigr).
		\end{equation*}
		Thus, by \eqref{eq:EgST}, we have
		\begin{equation*}
			\E g(S_\delta,T_\delta) = \int_{\Delta} g(x,y-x)\cdot\frac{\theta}{y^2} \Bigl(1+\frac{y-x}{\delta}\Bigr)^{-\theta}\dd x\,\dd y
			= \iint\limits_{\substack{s\ge\delta,\\t\ge 0}} g(s,t) \cdot \frac{\theta}{(s+t)^2}\Bigl(1+\frac{t}{\delta}\Bigr)^{-\theta} \dd s\,\dd t,
		\end{equation*}\\[-20pt]
		which yields \eqref{eq:ST}.    
	\end{proof}     
	
	We conclude with a functional limit theorem for the number of short-lived singletons.
	For $n\in\N$, $\delta>0$, and $t \ge 0$, denote by $Q_{\delta,n}(t)$ the number of singletons
	born at or after $\floor{\delta n}$ that transformed into doubletons within time  $\floor{n t}$ after their birth.
	
	\begin{proposition}
		\label{prop:Z}
		Let $(Z(t), t\ge0)$ be a unit-rate Poisson counting process. Then the processes $Q_{\delta,n}(t)$
		converge as $n\to\infty$ to $\bigl(Z\bigl(\theta\log\bigl(1+\frac{t}{\delta}\bigr)\bigr),t\ge0\bigr)$
		in distribution in the $J_1$ topology on $D\bigl([0,+\infty)\bigr)$.
	\end{proposition}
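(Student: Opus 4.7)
The plan is to express $Q_{\delta,n}(t)$ as the value of $\Xi_n^{(1)}$ on a $t$-dependent set, identify the limit via the intensity measure $\mu^{(1)}$, and then upgrade finite-dimensional convergence to $J_1$ functional convergence by the jump-time argument already used in Proposition~\ref{prop:fpc}.

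First, I would observe that a singleton born at step $k_1$ that becomes a doubleton at step $m$ corresponds to an atom $(k_1,m)$ of $\Xi_1^{(1)}$, so
\begin{equation*}
Q_{\delta,n}(t) = \Xi_n^{(1)}\bigl(A_t^{(n)}\bigr),\qquad A_t^{(n)} = \bigl\{(x,y)\in\X_1 : x\ge\floor{\delta n}/n,\ y-x\le\floor{nt}/n\bigr\}.
\end{equation*}
Arguing as in Proposition~\ref{prop:fpc}, since $\supp\Xi_n^{(1)}\subset n^{-1}\mathbb Z^2$, on this support $A_t^{(n)}$ coincides with $A_t=\{(x,y)\in\X_1:x\ge\delta,\ y-x\le t\}$. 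The set $A_t$ is closed, convex, bounded away from zero, and a direct computation gives $\mu^{(1)}(A_t)=\theta\log(1+t/\delta)<\infty$, so $A_t\in\mathcal X_1$. Theorem~\ref{TH:MAIN_TH} combined with the continuous mapping theorem then yields $\bigl(Q_{\delta,n}(t_m),m\in[r]\bigr)\dto\bigl(\Xi^{(1)}(A_{t_m}),m\in[r]\bigr)$ for any $0\le t_1<\ldots<t_r$.

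Since the sets $A_t$ are nested in $t$, the process $(\Xi^{(1)}(A_t),t\ge0)$ has independent Poisson increments with $\Xi^{(1)}(A_t)-\Xi^{(1)}(A_s)\sim\Pois{\theta\log\tfrac{1+t/\delta}{1+s/\delta}}$ for $0\le s<t$, which matches the distribution of the increments of $\bigl(Z(\theta\log(1+t/\delta)),t\ge0\bigr)$; hence the two processes are equal in distribution.

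Finally, I would upgrade to $J_1$ convergence in $D([0,+\infty))$ by repeating the jump-time argument from Proposition~\ref{prop:fpc}. Both processes are purely jump-type c\`adl\`ag with all jumps of size $+1$; the jump times of $Q_{\delta,n}$ are the values $(m-k_1)/n$ for atoms $(k_1,m)$ of $\Xi_1^{(1)}$ with $k_1\ge\floor{\delta n}$; and since $\mu^{(1)}(A_T)<\infty$ for every $T>0$, only finitely many such jumps occur on $[0,T]$ a.s. Their joint convergence in position follows from Theorem~\ref{TH:MAIN_TH} and the atom interpretation of vague convergence (Theorem~3.13 in~\cite{R87}), after which Lemma~2.12 of~\cite{X92} gives $J_1$ convergence on each $D([0,T])$ and hence on $D([0,+\infty))$. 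I expect the proof to be mostly routine given the template of the earlier propositions; the only mildly subtle point is the discretization-versus-continuity interplay between $A_t^{(n)}$ and $A_t$, handled exactly as before by restricting to the lattice support.
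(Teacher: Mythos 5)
Your proposal is correct and takes essentially the same route as the paper: write $Q_{\delta,n}(t)=\Xi_n^{(1)}\bigl(\{(x,y):x\ge\delta,\ x\le y\le x+t\}\bigr)$ up to lattice effects, obtain $J_1$ convergence on $D\bigl([0,+\infty)\bigr)$ from Theorem~\ref{TH:MAIN_TH} via the jump-time argument of Proposition~\ref{prop:fpc}, and identify the limit through its independent Poisson increments with means $\theta\log\bigl(1+\tfrac{t_2}{\delta}\bigr)-\theta\log\bigl(1+\tfrac{t_1}{\delta}\bigr)$. The paper compresses the functional-convergence step into a single reference to the proof of Proposition~\ref{prop:fpc}; you have merely spelled out what that reference entails.
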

	
	\begin{proof}
		As in the proof of Proposition \ref{prop:fpc}, 
		\begin{equation*}
			\bigl(Q_{\delta,n}(t),t\ge0\bigr)\xrightarrow{d}
			\bigl(\Xi^{(1)}\bigl(\bigl\{(x,y): x\ge\delta,x\le y \le x+t\bigr\}\bigr),t\ge0\bigr), \quad n\to\infty,
		\end{equation*}
		on $D\bigl([0,+\infty)\bigr)$.
		Since $\Xi^{(1)}$ is a Poisson measure, the process on the right-hand side has independent Poisson increments.
		The increment between times $t_1$ and $t_2$ has mean
		\begin{equation*}
			\int_\delta^{+\infty}\!\dd x\int_{x+t_1}^{x+t_2} \frac{\theta}{y^2}\,\dd y
			= \theta\log\Bigl(1+\frac{t_2}{\delta}\Bigr) - \theta\log\Bigl(1+\frac{t_1}{\delta}\Bigr),
		\end{equation*}
		which proves the claim.
	\end{proof}
	
	Similarly, one can obtain counterparts of Propositions \ref{prop:ST} and \ref{prop:Z} for blocks
	that rapidly grow from size $1$ to $N$ or even from $N_1$ to $N_2$. However, the limiting distributions
	and processes turn out to be less explicit, as they are expressed in terms of involved special functions.

	\vspace{10pt}
	
	\noindent
	\textsc{Igor Sikorsky Kyiv Polytechnic Institute, Prospect Beresteiskyi 37, 03056, Kyiv, Ukraine} \\
	\textit{Email address}: \texttt{galganov.oleksii@lll.kpi.ua}
	
	\vspace{10pt}
	
	\noindent
	\textsc{Igor Sikorsky Kyiv Polytechnic Institute, Prospect Beresteiskyi 37, 03056, Kyiv, Ukraine;} \\
	\textsc{Institute of Mathematical Statistics and Actuarial Science, University of Bern, Alpeneggstrasse 22, CH-3012, Bern, Switzerland} \\
	\textit{Email address}: \texttt{andrii.ilienko@unibe.ch}
		
\end{document}